\def\Q{{\mathbb Q}}
\def\C{{\mathbb C}}
\def\OO{{\mathcal O}}
\def\FF{\mathcal{F}}
\def\II{\mathcal{I}}
\def\PP{\mathcal{P}}
\def\D{\mathrm{D}}
\def\Pic{{Pic}}
\def\Pic0{\mathrm{Pic}^0}
\def\Inf{\mathrm{Inf}}
\def\deg{\mathrm{deg}}
\def\NS{\mathrm{NS}}
\def\rank{\mathrm{rk\,}}
\def\twe{\mathrm{twe}}
\def\la{{\langle}}
\def\ra{{\rangle}}
\theoremstyle{plain}
\newtheorem{theorem}{Theorem}[section]
\newtheorem{theoremalpha}{Theorem}
\newtheorem{propositionalpha}[theoremalpha]{Proposition}
\newtheorem{specialcasealpha}[theoremalpha]{Special Case}
\newtheorem{conjecturealpha}[theoremalpha]{Conjecture}
\newtheorem{proposition/example}[theorem]{Proposition/Example}
\newtheorem{definition/theorem}[theorem]{Definition/Theorem}
\newtheorem{proposition}[theorem]{Proposition}
\newtheorem{corollary}[theorem]{Corollary}
\newtheorem{lemma}[theorem]{Lemma}
\newtheorem{criterion}[theorem]{Criterion}
\theoremstyle{definition}
\newtheorem{definition}[theorem]{Definition}
\newtheorem{remark}[theorem]{Remark}
\newtheorem{conjecture}[theorem]{Conjecture}
\newtheorem{conjecture/question}[theorem]{Conjecture/Question}
\newtheorem{remark/definition}[theorem]{Remark/Definition}
\newtheorem{notation/assumptions}[theorem]{Assumptions/Notation}
\numberwithin{equation}{section}
\theoremstyle{remark}
\begin{document}  

\title{Kobayashi hyperbolicity of complete linear systems on abelian varieties}

 \author{Federico Caucci}
\address{Sapienza Universit\`a di Roma, Dipartimento di Scienze di Base e Applicate per l'Ingegneria, Via Antonio Scarpa 16, 00161 Roma, Italy}
 \email{federico.caucci@uniroma1.it}

\maketitle

\setlength{\parskip}{.1 in}

\begin{abstract} 
We provide  a geometric condition ensuring that a very general element of a complete linear system on an abelian variety is Kobayashi hyperbolic.    Some related conjectures are also given.
 \end{abstract}

\section{Introduction}

It is well-known that a complex abelian variety $A$ is not  Kobayashi hyperbolic, as it admits non-constant entire curves, that is, non-constant holomorphic maps $\C \to A$. 
 Besides, by the so-called Bloch's Theorem, an entire curve $f \colon \C \to A$ is subject to a very strong restriction: the Zariski clousure $\overline{f(\C)}$ is a translation of an abelian subvariety of $A$. This result has a long history. It was stated in 1926 by Bloch  \cite{bloch} with an incomplete proof.    A complete argument was given only around the end of the 70's, independently and at the same time, by several people:  
 Ochiai \cite{ochai} and Kawamata \cite{kawamata}, Smyth \cite{smyth}, Green-Griffiths \cite{greengriffiths},  and Wong \cite{wong}. All these proofs basically builds on the original approach of Bloch.  
McQuillan \cite{mcquillan} later found a different,  more arithmetic argument.  
 
It is also known that the complement of an ample divisor on an abelian variety is Brody hyperbolic,\footnote{As usual, one refers to a variety which admits no non-constant entire curves by saying that it is Brody hyperbolic. This condition is equivalent to the Kobayashi hyperbolicity for \emph{compact} complex variety by Brody's theorem \cite{brody}. In general, Kobayashi hyperbolicity implies Brody's one.
} that is, the image  $f(\C)$ of a non-constant entire curve $f \colon \C \to A$ must intersect the support of any ample divisor on $A$. This was conjectured by Lang, and proved by Siu-Yeung \cite{siu} (see also \cite{yamanoi1} for a stronger and more recent result). 
We refer the interested reader to \cite{kobayashi, diverio}, and to \cite{demailly, javan} for surveys on Kobayashi hyperbolicity and related topics. 

Very recently, the following result appeared:
\begin{theorem}[\cite{itoetal}, Corollary 1.10]\label{thmI}
Let $A$ be a complex abelian variety of dimension $g \geq 3$, and let $L$ be an ample line bundle on $A$. Then,
\begin{enumerate}
\item[\emph{1)}] The linear system $|mL|$ is algebraically hyperbolic, if $m \geq g$.

\item[\emph{2)}] If $|L|$ has no base divisors, then $m \geq g-1$ suffices. 
\end{enumerate}
\end{theorem}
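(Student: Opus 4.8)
The plan is to prove that a very general member $D \in |mL|$ is algebraically hyperbolic. Such a $D$ is smooth and irreducible by Bertini, and by adjunction $\omega_D \cong \mathcal{O}_D(mL)$ is ample, so I must produce an $\epsilon > 0$ with $2g(\widetilde\Gamma) - 2 \ge \epsilon\,(L \cdot \Gamma)$ for every reduced irreducible curve $\Gamma \subseteq D$, where $\iota \colon \widetilde\Gamma \to \Gamma \hookrightarrow A$ denotes the normalization composed with the inclusion. The first step is to rule out curves of geometric genus $\le 1$: up to translation such a $\Gamma$ is an elliptic subvariety $E \subseteq A$, and a very general $D$ contains no translate $E + a$ by a parameter count in $|mL| = \mathbb{P}^N$. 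Indeed, writing $\delta := L \cdot E$, the incidence locus $\{(D,a) \in |mL| \times (A/E) : E + a \subseteq D\}$ has dimension at most $(g-1) + (N - r_a)$, where $r_a$ is the rank of $H^0(A, mL) \to H^0(E+a, (mL)|_{E+a})$; since the target has dimension $m\delta$, the inequality $m\delta \ge g$ makes this locus non-dominant over $\mathbb{P}^N$, and as $A$ has only countably many elliptic subvarieties a very general $D$ avoids all of them. In case (1) the trivial bound $\delta \ge 1$ gives $m\delta \ge g$; in case (2) one checks that $|L|$ having no base divisor forces $\delta \ge 2$, which is what lowers the threshold to $m \ge g-1$.

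For $q := g(\widetilde\Gamma) \ge 2$ I would induct on $g = \dim A$; the cases $\dim A \le 2$ are immediate, since a smooth member of a base-point-free ample linear system on an abelian surface is an irreducible curve of genus $\ge 2$, hence trivially algebraically hyperbolic. Let $B_\Gamma \subseteq A$ be the abelian subvariety generated by a translate of $\Gamma$, so that $\Gamma$ lies in a coset $B_\Gamma + a$ and $\dim B_\Gamma \le q$. If $\dim B_\Gamma < g$, then $\Gamma \subseteq D \cap (B_\Gamma + a) \in |\,(mL)|_{B_\Gamma + a}\,|$, and for a very general $D$ this restricted divisor is a very general member of that system — this uses surjectivity of $H^0(A, mL) \to H^0(B_\Gamma + a, (mL)|_{B_\Gamma + a})$, a projective-normality-type input — so applying part (1) of the statement to $B_\Gamma$ (licit since $m \ge \dim B_\Gamma$ in both cases) yields $2q - 2 \ge \epsilon'\,(L \cdot \Gamma)$, with $\epsilon'$ depending only on $\dim B_\Gamma$ and $m$, modulo some care at the special translates $a$ for which $D \cap (B_\Gamma + a)$ degenerates. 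If instead $\Gamma$ generates $A$ (so $q \ge g$), I would argue directly in $|mL|$: by rigidity of morphisms of abelian varieties, the reduced irreducible curves of geometric genus $q$ in $A$ form a family of dimension at most $3q - 3 + g$ (the moduli of $\widetilde\Gamma$, the $g$-dimensional choice of translation, and a discrete choice in $\mathrm{Hom}(\mathrm{Jac}\,\widetilde\Gamma, A)$); pairing this with the fibre dimension $N - \mathrm{rk}\,\big(H^0(A, mL) \to H^0(\widetilde\Gamma, \iota^*(mL))\big)$ of the associated incidence variety over $|mL|$, one checks that it cannot dominate $|mL|$, so no such $\Gamma$ lies on a very general $D$.

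The step I expect to be the main obstacle is exactly this last one. The difficulty is caused by curves that generate $A$ but keep bounded geometric genus while having arbitrarily large degree — for instance the images of a fixed Abel–Jacobi curve under the multiplication-by-$n$ isogenies, whose degree grows like $n^2$. For the dimension count to close for such $\Gamma$ one must bound below $\mathrm{rk}\,\big(H^0(A, mL) \to H^0(\widetilde\Gamma, \iota^*(mL))\big)$ — equivalently, control how linearly degenerate $\Gamma$ can be in the embedding defined by $mL$ — and this is precisely where $m \ge g$ (resp. $m \ge g-1$ together with the base-divisor-free hypothesis) must be used quantitatively, so that pinning down the exact constants $g$ and $g-1$ becomes the delicate point. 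A minor further issue is keeping $\epsilon$ uniform over the countably many $B_\Gamma$ occurring in the induction, which should reduce to checking that the constants depend only on $\dim B_\Gamma$ and $m$. An alternative route would bypass the induction by proving that $\Omega^1_D$ is sufficiently positive: it sits in the exact sequence $0 \to \mathcal{O}_D(-mL) \to \mathcal{O}_D^{\oplus g} \to \Omega^1_D \to 0$, so it is globally generated, and one could try to extract the curve inequality from a Wronskian estimate applied to the surjection from the pullback of $\Omega^1_D$ to $\widetilde\Gamma$ onto $\omega_{\widetilde\Gamma}$; but controlling the negative contribution $\mathcal{O}_D(-mL)$ along curves seems to require the same input as above.
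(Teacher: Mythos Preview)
Your approach is entirely different from the paper's, and the step you yourself flag as the main obstacle is a genuine gap.

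The paper never counts curves. It applies Criterion~\ref{criterion}: it suffices to find a rational $\delta>0$ with $M_{mL}\langle\delta L\rangle$ nef and $(m-\delta(g-2))L$ ample. The input, coming from \cite{jipa}, is that for $m\ge2$ the $\Q$-twisted kernel bundle $M_{mL}\langle\tfrac{m}{m-1}L\rangle$ is always $GV$ (hence nef), and is $M$-regular (hence ample) precisely when $|L|$ has no base divisor. In the base-divisor-free case one takes $\delta$ just below $\tfrac{m}{m-1}$, so that $m-\delta(g-2)>m-\tfrac{m}{m-1}(g-2)\ge0$ as soon as $m\ge g-1$; otherwise one takes $\delta=\tfrac{m}{m-1}$ and needs the strict inequality, forcing $m\ge g$. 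The thresholds $g-1$ and $g$ fall out of this one-line computation; no incidence variety is ever formed.

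Your dimension count for curves $\Gamma$ generating $A$ cannot work as stated. Taken literally it would show that a very general $D$ contains \emph{no} such curve, but on a simple abelian variety every curve generates $A$, so $D$ would be curve-free. What algebraic hyperbolicity actually demands is a bound $2q-2\ge\epsilon\,(L\cdot\Gamma)$, so you must exclude only curves of degree large relative to $q$; for that your incidence count requires the rank of $H^0(A,mL)\to H^0(\widetilde\Gamma,\iota^*(mL))$ to grow with $L\cdot\Gamma$. It does not: this rank is bounded above by $h^0(A,mL)$, a fixed number, while your own isogeny-image examples have degree tending to infinity with genus fixed. The deformation argument behind Criterion~\ref{criterion} sidesteps this by working with the normal bundle of $\Gamma$ in $D$ (where $M_{mL}$ appears naturally) rather than with moduli of abstract curves plus translations. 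A smaller unjustified step is your claim that the absence of base divisors forces $L\cdot E\ge2$ for every elliptic curve $E\subset A$: the Decomposition Theorem only excludes principally polarized \emph{direct} factors, which is weaker than ruling out arbitrary elliptic subvarieties of $L$-degree one.
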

\noindent A basepoint-free\footnote{We recall that, on an abelian variety, $|mL|$ is always basepoint-free as soon as $m \geq 2$ (see, e.g., \cite[Proposition 4.1.5]{birk}).} linear system is said to be algebraically (resp.\! Kobayashi) hyperbolic, if its very general element is algebraically (resp.\! Kobayashi) hyperbolic as a variety. Algebraic hyperbolicity is a notion  introduced by Demailly in \cite{demailly} as an algebraic analogue of Kobayashi hyperbolicity. The reader may find in \S \ref{S2}  its definition and the (conjectural) relation with Kobayashi hyperbolicity. 
Although not noted in \cite{itoetal},  it is known that these two notions are equivalent for closed subvarieties of abelian varieties (see Remark \ref{rmkjava} below).   
Theorem \ref{thmI} confirms, in the case of abelian varieties and with actually a better bound, a general conjecture of Moraga and Yeong \cite{moraga} about the algebraic hyperbolicity of adjoint bundles (see Conjecture \ref{conjectureMY} below), which has been motivated by Fujita's famous conjecture \cite{fuconj}.

Let us recall that, thanks to the Decomposition Theorem (see \cite[\S 4.3]{birk}, or \cite[\S 10.3]{kempf}),  the base locus of an ample line bundle $L$ on an abelian variety $A$ contains a divisor if and only if the polarized abelian variety $(A, L)$ contains a principally polarized factor $(B, \Theta)$, i.e., $(A, L) \simeq (A', L') \times (B, \Theta)$ as polarized abelian varieties. 
As observed  in \cite{itoetal}, Item \emph{(1)} of Theorem \ref{thmI} is optimal:  if 
\begin{equation}\label{special1}
(A, L) \simeq (A', L') \times (E, \Theta),
\end{equation}
 with $(E, \Theta)$ a principally polarized \emph{elliptic curve}, then any element of $|mL|$ is not hyperbolic when $m\leq g-1$, and $g = \dim A \geq 2$ (see \emph{op.cit.}, Proof of Proposition 1.12). 
We point out that polarized abelian varieties as in \eqref{special1} have been characterized by Nakamaye \cite{nakamaye}, and, more
recently, by Alvarado-Pareschi \cite{alpa2}.

In this short note, we aim to show how taking into account the \emph{geometry} of a polarized abelian variety $(A, L)$ leads to an improvement of Theorem \ref{thmI}, in such a way  that \eqref{special1} turns out to be the only ``exceptional'' case. Namely, we have:
\begin{theoremalpha}\label{main}
Let $A$ be a complex abelian variety of dimension $g \geq 3$, and let $L$ be an ample line bundle on $A$. Then, $|mL|$ is Kobayashi hyperbolic if 
\[
m \geq g-1\, ,
\]
unless $(A, L)$ is as in \eqref{special1}. In this case, one needs to take $m \geq g$.
\end{theoremalpha}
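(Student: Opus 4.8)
The plan is to reduce, via Bloch's theorem, to a dimension count on incidence varieties, the only delicate point being a piece of numerology that is then resolved by the classical structure theory of polarized abelian varieties. Since $|mL|$ is base-point free for $m\ge2$, a very general $X\in|mL|$ is smooth and irreducible by Bertini, so it is a compact complex manifold and, by Brody's theorem, it is Kobayashi hyperbolic if and only if it carries no non-constant entire curve. Were $f\colon\C\to X\subseteq A$ such a curve, Bloch's theorem would force $\overline{f(\C)}$ to be a translate $B+a$ of a positive-dimensional abelian subvariety $B\subseteq A$, and then $B+a\subseteq X$; conversely, any such translate lying in $X$ produces non-constant entire curves in $X$. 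Hence $X$ is Kobayashi hyperbolic exactly when it contains no translate of a positive-dimensional abelian subvariety of $A$. Since $A$ has only countably many abelian subvarieties, it suffices to show that for every abelian subvariety $B\subseteq A$ with $1\le k:=\dim B\le g-1$ the set
\[
\Sigma_B:=\{\,X\in|mL|\ :\ B+a\subseteq X\ \text{for some }a\in A\,\}
\]
is a proper closed subvariety of $|mL|$; a very general $X$ then lies outside $\bigcup_B\Sigma_B$. (That $\Sigma_B$ is closed follows from properness of $A/B$.)

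Fix $B$, set $A':=A/B$ with projection $p\colon A\to A'$, so that the translates of $B$ are precisely the fibres $B_y:=p^{-1}(y)$, $y\in A'$. Introduce the incidence variety $I_B=\{(X,y)\in|mL|\times A':B_y\subseteq X\}$, which maps onto $\Sigma_B$ and whose fibre over $y$ is $\P(\ker\mathrm{res}_y)$, where $\mathrm{res}_y\colon H^0(A,mL)\to H^0(B_y,mL|_{B_y})$ is restriction. The crucial cohomological input is that $\mathrm{res}_y$ is \emph{surjective} for all $y$ once $m\ge2$; equivalently, that $p_*\OO_A(mL)$ is globally generated. Indeed $p_*\OO_A(mL)$ is locally free of rank $r_B:=h^0(B,mL|_B)=m^k h^0(B,L|_B)$ and satisfies $H^i(A',p_*\OO_A(mL)\otimes\alpha)=H^i(A,mL\otimes p^*\alpha)=0$ for $i>0$ and all $\alpha\in\Pic0(A')$, by Kodaira vanishing for the ample line bundle $mL\otimes p^*\alpha$; global generation for $m\ge2$ then follows from standard regularity results for sheaves on abelian varieties. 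Granting this, every fibre of $I_B\to A'$ has dimension $h^0(A,mL)-1-r_B$, so
\[
\dim\Sigma_B\ \le\ \dim I_B\ =\ \dim|mL|-\bigl(m^k h^0(B,L|_B)-(g-k)\bigr),
\]
and $\Sigma_B\subsetneq|mL|$ as soon as $m^k h^0(B,L|_B)>g-k$. A direct verification, using $g\ge3$, shows that this inequality holds for \emph{every} $B$ with $1\le k\le g-1$ when $m\ge g$, and also when $m=g-1$ with the single possible exception $k=1$ and $h^0(B,L|_B)=1$ — that is, $B=E$ an elliptic curve with $L|_E$ a principal polarization: for $k\ge2$ one has $m^k\ge(g-1)^2>g-k$, and for $k=1$ with $\deg(L|_E)\ge2$ one has $m\deg(L|_E)\ge2(g-1)>g-1$. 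So the only thing that could spoil hyperbolicity of a very general member of $|(g-1)L|$ is an elliptic curve $E\subseteq A$ with $L|_E$ a principal polarization.

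The closing step is to show that the existence of such an $E$ forces $(A,L)$ to be as in \eqref{special1}; this is where the geometry of $(A,L)$ is used, and where the result improves on Theorem \ref{thmI}. Since $\deg(L|_E)=1$, the homomorphism $\phi_{L|_E}\colon E\to\widehat E$ is an isomorphism, so the norm endomorphism $N_E:=\iota_E\circ\phi_{L|_E}^{-1}\circ\widehat{\iota_E}\circ\phi_L\colon A\to A$ satisfies, using the identity $\widehat{\iota_E}\circ\phi_L\circ\iota_E=\phi_{L|_E}$, both $N_E^2=N_E$ and $N_E\circ\iota_E=\iota_E$. Thus $N_E$ is an idempotent with image $E$, and $(e,x)\mapsto e+x$ is an isomorphism $E\times A''\xrightarrow{\ \sim\ }A$, where $A'':=\mathrm{im}(\id_A-N_E)$. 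Since $A''=E^{\perp_L}$ is the complementary abelian subvariety of $E$ with respect to $L$, the first Chern class of $L$ has no mixed term, i.e. $c_1(L)=\mathrm{pr}_E^*c_1(L|_E)+\mathrm{pr}_{A''}^*c_1(L|_{A''})$, so after a translation $(A,L)\simeq(A'',L|_{A''})\times(E,L|_E)$ with $L|_E$ the principal polarization $\Theta$; this is \eqref{special1}. The converse being clear, we conclude: if $(A,L)$ is not as in \eqref{special1} then no such $E$ exists, the inequality above holds for all $B$ already at $m=g-1$, and a very general $X\in|(g-1)L|$ is Kobayashi hyperbolic; for $m\ge g$, a very general $X\in|mL|$ is Kobayashi hyperbolic without exception; and in case \eqref{special1} every member of $|(g-1)L|$ contains a translate of $E$ and hence is not hyperbolic (\cite{itoetal}, proof of Proposition 1.12), so there the bound $m\ge g$ is necessary.

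The main obstacle is the uniform surjectivity of the restriction maps $\mathrm{res}_y$, equivalently the global generation of $p_*\OO_A(mL)$ for $m\ge2$: the dimension estimate requires control of \emph{every} fibre of $I_B\to A'$, not only the generic one, and this is the cohomological point that must be secured. The norm-endomorphism computation is the other essential ingredient, and is precisely where the geometry of $(A,L)$ enters to sharpen Theorem \ref{thmI}.
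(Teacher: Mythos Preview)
Your incidence-variety approach is genuinely different from the paper's and would be pleasingly direct if it worked, but there is a real gap precisely at the step you yourself flag as ``the main obstacle''. The claim that $p_*\OO_A(mL)$ is globally generated for all $m\ge2$ does \emph{not} follow from the IT(0) property you (correctly) establish: IT(0), hence $M$-regularity, gives only \emph{continuous} global generation in the Pareschi--Popa sense, not global generation---a principal polarization is IT(0) yet not globally generated. Concretely, take $A=E\times E\times E'$, $L=\Theta\boxtimes\Theta\boxtimes\Theta'$, and $B$ the diagonal in the first two factors. For $m=2$ the map $\mathrm{res}_{(a,c)}$ is, after evaluating the $E'$-factor at $c$, the multiplication $H^0(E,\OO_E(2[0]))\otimes H^0(E,\OO_E(2[-a]))\to H^0(E,\OO_E(2[0]+2[-a]))$, which drops rank exactly when $a\in E[2]$. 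This particular $(A,L)$ lies in the exceptional case~\eqref{special1}, but you invoke uniform surjectivity for \emph{all} $(A,L)$ and $B$ before isolating that case, so the argument as written breaks; and even after restricting to $(A,L)$ not as in~\eqref{special1} you would still owe a proof that $\mathrm{rank}(\mathrm{res}_y)>g-k$ for every $y$ and every $B$, which no ``standard regularity result'' supplies. (The easy observation that the image of $\mathrm{res}_y$ is a basepoint-free sub-linear series of an ample line bundle on $B_y$, hence defines a finite morphism, only gives $\mathrm{rank}(\mathrm{res}_y)\ge k+1$, which is insufficient when $k<g/2$.)

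For comparison, the paper proceeds quite differently and never counts dimensions. It first writes $(A,L)$, via the Decomposition Theorem, as a product of a factor without base divisors and indecomposable principally polarized factors $(A_i,\Theta_i)$ of dimension~$\ge2$; thus the exclusion of~\eqref{special1} enters at the outset, not via a norm-endomorphism argument at the end. The new ingredient is that on each such $(A_i,\Theta_i)$ the $\Q$-twisted kernel bundle $M_{m\Theta_i}\la\tfrac{m}{m-1}\Theta_i\ra$ is \emph{ample} (not merely nef): this is Corollary~\ref{corAP}, which combines the Alvarado--Pareschi Lemma~\ref{lemmaAP} (generation of $M_{m\Theta_i}\otimes E_{A_i,\frac{m}{m-1}\Theta_i}$ by an ample prime divisor of $\Pic0 A_i$) with Pareschi's ampleness criterion (Theorem~\ref{parcriterion}). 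One can then run Criterion~\ref{criterion} with some rational $\delta<\tfrac{m}{m-1}$, exactly as in the proof of Theorem~\ref{thmI}, and Remark~\ref{rmkjava} upgrades algebraic to Kobayashi hyperbolicity. Interestingly, controlling restriction maps to abelian subvarieties---the missing piece in your argument---is closely related to exactly the kind of generation properties of kernel bundles that this machinery provides.
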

\noindent The proof of Theorem \ref{main} uses a criterion in \cite{itoetal}, which guarantees the algebraic hyperbolicity of a basepoint-free linear system on $A$,\footnote{We state it directly for abelian varieties as Theorem \ref{criterion}, but it holds true more generally for varieties with nef tangent bundle. This is used in \cite{itoetal} to get the algebraic hyperbolicity of linear systems in several other cases.}  and  recent results of 
 Pareschi \cite{parANT, parPA} and Alvarado-Pareschi \cite{alpa1, alpa2} about  certain generation properties of coherent sheaves on abelian varieties. In particular, we get 
\begin{specialcasealpha}\label{specialcase}
Let $(A, \Theta)$ be an indecomposable principally polarized abelian variety of dimension $g \geq 3$. Then, the linear system $|m\Theta|$ is Kobayashi hyperbolic if $m \geq g-1$.
\end{specialcasealpha}
\noindent Here, \emph{indecomposable} means that $(A, \Theta)$ cannot be written as the product of two lower-dimensional p.p.a.v.'s $(A_i, \Theta_i)$, which, accordingly to the above-mentioned Decomposition Theorem, is equivalent to saying that the theta divisor $\Theta$ is irreducible.

Concerning the sensibly simpler
  case of abelian surfaces,   we have the next result, which is included for completeness.
\begin{propositionalpha}\label{propsurfaces}
Let $A$ be a complex abelian variety of dimension $g =2$, and let $L$ be an ample line bundle on $A$. Then, $|mL|$ is Kobayashi hyperbolic as soon as 
$m \geq 2$, and $|L|$ itself is Kobayashi hyperbolic,
 if $L$ is basepoint-free.
\end{propositionalpha}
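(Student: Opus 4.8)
The plan is to treat the two assertions separately, in both cases producing a concrete smooth member of the relevant linear system whose genus is high enough to force Kobayashi hyperbolicity, and then invoke Brody's theorem together with the fact that on an abelian variety hyperbolicity is inherited by very general members of a basepoint-free system. Recall that a smooth projective curve is Kobayashi hyperbolic precisely when its genus is at least $2$; and a smooth surface containing no rational or elliptic curves and no non-constant entire curve is Brody, hence (being compact) Kobayashi hyperbolic. On an abelian surface, adjunction gives, for a smooth connected curve $C \in |M|$ with $M$ ample, $2p_a(C) - 2 = C \cdot (C + K_A) = C^2 = M^2 = 2\chi(\OO_A(M)) \cdot 1$... more precisely $M^2 = 2h^0(M) \geq 2$ when $M$ is ample, with equality only for a principal polarization; in general $M^2 \geq 2$ and $p_a(C) = 1 + \tfrac12 M^2 \geq 2$ as soon as $M^2 \geq 2$, i.e.\ always, so in fact every smooth curve in an ample linear system on an abelian surface has genus $\geq 2$.

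First I would dispatch the basepoint-free case of $|L|$ itself. If $L$ is basepoint-free then by Bertini a general $C \in |L|$ is smooth, and since $L$ is ample $C$ is connected; by the adjunction computation above $p_a(C) = 1 + \tfrac12 L^2 \geq 2$ because $L^2 = 2h^0(L) \geq 2$. Hence a general, and therefore (using that the hyperbolic locus is the complement of a countable union of proper subvarieties of the base, cf.\ the equivalence of algebraic and Kobayashi hyperbolicity for subvarieties of abelian varieties recalled in the introduction) a very general, member of $|L|$ is a smooth curve of genus $\geq 2$, which is Kobayashi hyperbolic. This proves the second assertion.

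Next, for $m \geq 2$: here $|mL|$ is automatically basepoint-free by the cited \cite[Proposition 4.1.5]{birk}. For $m \geq 3$, $|mL|$ is very ample, so by Bertini a general $C \in |mL|$ is a smooth connected curve, and $p_a(C) = 1 + \tfrac12 (mL)^2 = 1 + \tfrac{m^2}{2} L^2 \geq 1 + \tfrac{m^2}{2} \cdot 2 \geq 10$, certainly $\geq 2$, hence Kobayashi hyperbolic; as above this passes to the very general member. The only delicate point is $m = 2$, where $|2L|$ is basepoint-free but may fail to be very ample (precisely when $(A,L)$ is a principal polarization, where $|2L|$ gives a degree-$2$ map onto a Kummer quartic surface). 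Even so, Bertini for basepoint-free systems still gives a general smooth $C \in |2L|$, connected because $2L$ is ample, and the adjunction count yields $p_a(C) = 1 + 2L^2 \geq 5 \geq 2$. So again the general, hence very general, element is a smooth curve of genus $\geq 2$ and therefore Kobayashi hyperbolic.

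The main — indeed only — obstacle I anticipate is purely bookkeeping: making sure the passage from ``general'' (Bertini) to ``very general'' is legitimate, i.e.\ that Kobayashi hyperbolicity of members of a basepoint-free linear system on an abelian variety is a very-general condition. This is exactly the content of the algebraic-versus-Kobayashi equivalence for subvarieties of abelian varieties alluded to in the introduction (Remark \ref{rmkjava}): algebraic hyperbolicity is a countable-union-of-closed-conditions statement on the base of the family, and on abelian varieties it coincides with Kobayashi hyperbolicity, so the hyperbolic members form the complement of a countable union of proper subvarieties, which is nonempty as soon as one general member is hyperbolic. Everything else is the elementary genus computation via adjunction on an abelian surface plus Bertini; no deeper input (and in particular none of the Pareschi/Alvarado-Pareschi machinery needed for $g \geq 3$) is required.
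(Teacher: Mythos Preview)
Your proof is correct and follows essentially the same approach as the paper: use Bertini to get a smooth irreducible curve $D$ in the (basepoint-free) linear system, compute its genus, and observe $g(D)\geq 2$. The paper does the genus computation via the Euler characteristic of $0\to\OO_A(-D)\to\OO_A\to\OO_D\to 0$, obtaining $g(D)=1+h^0(A,\OO_A(D))\geq 2$; you do it via adjunction ($K_A=0$, so $2g(D)-2=D^2=L^2=2h^0(L)$), which is the same identity. Your separate treatment of $m=2$ versus $m\geq 3$ and the appeal to Remark~\ref{rmkjava} for the general-to-very-general passage are unnecessary: smoothness is already a Zariski-open condition on $|mL|$, and \emph{every} smooth member has genus $\geq 2$, so hyperbolicity holds on a nonempty open set, not merely on a countable intersection.
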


Given  a (possibly primitive) polarized abelian variety  $(A, L)$  of dimension $g \geq 1$, a constant  $\beta(A, L)$ attached to it, which is called the basepoint-freeness threshold and has been introduced in \cite{jipa}, recently proved to detect a lot of information about the positivity of $(A, L)$ (see \cite{jipa, ca, ji1}, to name just a few). In \cite{ca}, the author conjectured that $\beta(A, L)$ could be bounded from above in terms of  the degree (with respect to $L$) of all non-zero abelian subvarieties of $A$ (see \eqref{conj000}). This would provide a higher syzygies analogue, for abelian varieties, of the generalized Fujita's conjecture. Along with the hyperbolicity criterion of \cite{itoetal} (Criterion \ref{criterion} below) and \cite[Theorem 1.9]{itoetal}, this suggests a conjecture  
about  the hyperbolicity of a (possibly primitive) linear system $|L|$ on an abelian variety $A$:
\begin{conjecturealpha}\label{conjIntro}
\emph{Let $(A, L)$ be a polarized abelian variety of dimension $g \geq 2$, such that
\begin{equation}\label{degree0}
(L^{\dim B} \cdot B) > ((g-1)\dim B)^{\dim B}
\end{equation}
for all abelian subvarieties $\{0\} \neq B \subseteq A$. Then, $|L|$ is Kobayashi hyperbolic.} 
\end{conjecturealpha}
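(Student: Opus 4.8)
The plan is to reduce the statement to the algebraic hyperbolicity criterion of \cite{itoetal}, the reduction passing through the basepoint-freeness threshold $\beta(A,L)$. As a preliminary remark, \eqref{degree0} already has the right flavour: if $(A,L)$ had a principally polarized factor $(B,\Theta)$, then $L|_{B}\simeq\Theta$ and $(L^{\dim B}\cdot B)=(\dim B)!$, so \eqref{degree0} would force $(\dim B)!>((g-1)\dim B)^{\dim B}$, which is impossible for $g\geq 2$; hence \eqref{degree0} rules out principally polarized factors, so $|L|$ has no base divisor, and it is in fact basepoint-free (as follows from the bound on $\beta(A,L)$ obtained below), so that the assertion is meaningful. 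By Remark \ref{rmkjava} it then suffices to prove that the very general member of $|L|$ is \emph{algebraically} hyperbolic. Now Criterion \ref{criterion} together with \cite[Theorem 1.9]{itoetal} yields this algebraic hyperbolicity as soon as $\beta(A,L)<\tfrac{1}{g-1}$ — indeed this is precisely the implication through which Conjecture \ref{conjIntro} was extracted from the criterion — so the whole problem is reduced to the numerical inequality $\beta(A,L)<\tfrac{1}{g-1}$.

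To establish this inequality, one invokes the conjectural bound \eqref{conj000} of \cite{ca}, which estimates $\beta(A,L)$ from above by the quantities $\dim B/(L^{\dim B}\cdot B)^{1/\dim B}$ over the non-zero abelian subvarieties $B\subseteq A$:
\[
\beta(A,L)\ \leq\ \sup_{\{0\}\neq B\subseteq A}\,\frac{\dim B}{(L^{\dim B}\cdot B)^{1/\dim B}}.
\]
For a fixed such $B$, raising the inequality $\dim B/(L^{\dim B}\cdot B)^{1/\dim B}<\tfrac{1}{g-1}$ to the power $\dim B$ shows it to be equivalent to $(L^{\dim B}\cdot B)>((g-1)\dim B)^{\dim B}$, that is, to the hypothesis \eqref{degree0}. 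Hence \eqref{degree0} asserts exactly that every term of the supremum above is strictly less than $\tfrac{1}{g-1}$, so \eqref{conj000} gives $\beta(A,L)<\tfrac{1}{g-1}$, and the reduction of the first paragraph concludes.

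The main obstacle is that \eqref{conj000} is itself open, so the argument just sketched is only conditional; proving \eqref{conj000} in general is where the genuine difficulty lies. An unconditional proof of Conjecture \ref{conjIntro} would therefore require either a proof of \eqref{conj000} — presumably through the Fourier--Mukai and generic-vanishing methods by which $\beta(A,L)$ is analysed in \cite{parANT, parPA, alpa1, alpa2}, which already underlie Theorem \ref{main} — or a direct bound on $\beta(A,L)$ under the specific numerical assumption \eqref{degree0}, for example via the Seshadri-type interpretation of $\beta$, or by an induction over the lattice of abelian subvarieties of $A$ exploiting the behaviour of $\beta$ under isogenies, products and quotients. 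A natural first case is that of a simple $A$, where \eqref{degree0} reduces to the single inequality $(L^{g})>((g-1)g)^{g}$ and it remains only to prove $\beta(A,L)<\tfrac{1}{g-1}$ for one polarization on one abelian variety.
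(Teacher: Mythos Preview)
Your proposal matches the paper's own treatment exactly: the statement is a \emph{conjecture}, not a theorem, and the paper motivates it precisely by the conditional reduction you describe --- hypothesis \eqref{degree0} plus the conjectural bound \eqref{conj000} forces $\beta(A,L)<\tfrac{1}{g-1}$, and then Remark \ref{rmkS5} (i.e.\ Proposition \ref{propsurfaces} for $g=2$ and \cite[Theorem 1.9]{itoetal} for $g\geq 3$) gives hyperbolicity. One small technical point: you write the right-hand side of \eqref{conj000} as a $\sup$ and then pass from strict inequality on each term to strict inequality of the supremum; this needs the supremum to be attained, which the paper secures in a footnote (only finitely many values occur, so it is in fact a $\mathrm{Max}$).
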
 
\noindent We note that part of the above-mentioned conjecture of \cite{ca} predicts that, under the assumption \eqref{degree0}, $L$ is indeed basepoint-free (and actually very ample, when $g \geq 3$). We will show in \S \ref{lastS} that Conjecture \ref{conjIntro} is true in dimension $g \leq 3$, and ``almost true'' if $g =4$. Moreover, it holds in arbitrary dimension ``up to a factor $2$'' (see Proposition \ref{propIntro00}).   
We refer the reader to \S \ref{lastS} for more on this circle of ideas. 
Here we just collect the next result for a \emph{general} member of the moduli space of polarized abelian varieties of a given type, which   follows from the semi-continuity
of the basepoint-freeness threshold and from certain known upper bounds: 
\begin{propositionalpha}\label{proplast0}
Let $(A, L)$  be general polarized abelian variety of dimension $g \geq 2$. If
\[
h^0(A, L) \geq \frac{(2g(g-1))^g}{g!}\, , 
\]
then $|L|$ is Kobayashi hyperbolic.
\end{propositionalpha}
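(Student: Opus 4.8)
The plan is to bound the basepoint-freeness threshold $\beta(A,L)$ from above and then to feed this into the hyperbolicity criterion recalled in the Introduction. The first step is to rewrite the numerical hypothesis. Since $L$ is ample on an abelian variety, $H^{i}(A,L)=0$ for $i>0$, so $h^{0}(A,L)=\chi(L)$; and by Riemann--Roch on an abelian variety $\chi(L)=(L^{g})/g!$. Hence the assumption $h^{0}(A,L)\ge (2g(g-1))^{g}/g!$ is equivalent to
\[
\sqrt[g]{(L^{g})}\ \ge\ 2g(g-1).
\]

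The second step is the criterion itself: as is implicit in the discussion preceding Conjecture \ref{conjIntro}, Criterion \ref{criterion} together with \cite[Theorem 1.9]{itoetal} (and Remark \ref{rmkjava}, to pass from algebraic to Kobayashi hyperbolicity) shows that if $\beta(A,L)<\tfrac{1}{g-1}$ then $|L|$ is Kobayashi hyperbolic --- note that $\beta(A,L)<1$ already forces $L$ to be basepoint-free, so the criterion does apply. It therefore suffices to prove $\beta(A,L)<\tfrac{1}{g-1}$ for a general $(A,L)$ satisfying the hypothesis.

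The third and main step is the estimate on $\beta$. A general polarized abelian variety is simple, hence has no proper nonzero abelian subvariety, so the conjectural bound \eqref{conj000} of \cite{ca} specializes to the expectation $\beta(A,L)\le \sqrt[g]{g^{g}/(L^{g})}=g/\sqrt[g]{(L^{g})}$. While \eqref{conj000} is open in general, for a \emph{general} member of a moduli component this inequality --- or one of the same shape, with a harmless multiplicative constant strictly less than $2$ --- is among the known bounds for $\beta$: one establishes it on a single, conveniently chosen member $(A_{0},L_{0})$ of the component, where $\beta$ is accessible through cohomological rank functions, and then propagates it to the general member using the upper semi-continuity of $\beta$ in families of polarized abelian varieties, keeping in mind that $(L^{g})=(L_{0}^{g})=g!\,\chi(L)$ is a deformation invariant. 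Combined with the reformulated hypothesis this yields
\[
\beta(A,L)\ \le\ \frac{g}{\sqrt[g]{(L^{g})}}\ \le\ \frac{g}{2g(g-1)}\ =\ \frac{1}{2(g-1)}\ <\ \frac{1}{g-1},
\]
and the statement follows from the second step.

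The hard part is precisely this last input: knowing the bound $\beta(A,L)\le g/\sqrt[g]{(L^{g})}$ (or a bound of the same order, with constant $<2$) for a general polarized abelian variety, i.e.\ having the ``generic'' part of \eqref{conj000} at one's disposal. Once that is available, the remaining points are routine: that ``general'' forces $A$ to be simple, that $\beta$ is upper semi-continuous in families, and that the factor $2$ built into the hypothesis leaves enough room to obtain the \emph{strict} inequality $\beta(A,L)<\tfrac{1}{g-1}$ even though $h^{0}(A,L)\ge (2g(g-1))^{g}/g!$ is assumed only with ``$\ge$''.
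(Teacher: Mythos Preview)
Your overall strategy matches the paper's: pass to a \emph{simple} member $(A_0,L_0)$ of the moduli component, bound $\beta(A_0,L_0)$ there, use upper semi-continuity of $\beta$ (\cite[Theorem 3.1]{itoNachr}) to propagate the bound to the general member, and conclude via Remark \ref{rmkS5}. This is exactly what the paper does in Proposition \ref{proplast1}, from which Proposition \ref{proplast0} is deduced as an immediate consequence.

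The one point where your write-up is imprecise is the displayed inequality $\beta(A,L)\le g/\sqrt[g]{(L^{g})}$. That is the \emph{conjectural} bound \eqref{conj000} for simple $A$, and it is not known for $g\ge 4$; so your actual chain of inequalities, as written, invokes an open statement. What is available, and what the paper uses, is Jiang's bound (Theorem \ref{ji2alpha}, i.e.\ \cite[Theorems 1.6, 1.8, 1.9]{ji2}): for simple $(A_0,L_0)$ of dimension $g\ge 4$ one has $\beta(A_0,L_0)\le (2g-\alpha_g)/\sqrt[g]{(L_0^g)}$ with an explicit $\alpha_g>0$. This is precisely the ``constant strictly less than $2$'' you allude to in your hedge, and it is what turns the non-strict hypothesis $h^0\ge (2g(g-1))^g/g!$ into the strict conclusion $\beta(A_0,L_0)\le (2g-\alpha_g)/(2g(g-1))<1/(g-1)$. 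For $g=2,3$ the full conjecture \eqref{conj000} is known by \cite{ca,itoAG}, so there the sharper constant $g$ does apply. So: your argument is correct once the vague ``known bounds'' is replaced by the specific citations, and the displayed line is rewritten with $2g-\alpha_g$ in place of $g$.
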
 
\noindent This already improves \cite[Corollary 1.13]{itoetal}, and Proposition \ref{proplast1} below gives  an even better (but more involved) result. 
Moreover, 
 the following weaker version of Conjecture \ref{conjIntro}  holds true when $g \leq 6$, and it has not yet been verified
  only for a  finite number of  primitive polarization types in each dimension $g \geq 7$:  
\begin{conjecturealpha}
Let $(A, L)$ be a general polarized abelian variety of dimension $g \geq 2$.  If
\begin{equation*}
h^0(A, L) > \frac{(g\, (g-1))^g}{g!}\, ,
\end{equation*}
 then $|L|$ is Kobayashi hyperbolic.
\end{conjecturealpha}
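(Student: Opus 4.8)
The plan is to reduce the statement to a sufficiently sharp upper bound for the basepoint-freeness threshold $\beta(A,L)$, and then to quote what is known about $\beta$ for a general polarized abelian variety of prescribed type. By the hyperbolicity criterion (Criterion \ref{criterion}) together with \cite[Theorem 1.9]{itoetal}, and using that algebraic and Kobayashi hyperbolicity coincide for closed subvarieties of abelian varieties (Remark \ref{rmkjava}), it suffices to show that a general $(A,L)$ of dimension $g$ with $h^0(A,L) > (g(g-1))^g/g!$ satisfies $\beta(A,L) < \tau_g$, where $\tau_g>0$ is the explicit threshold extracted from the criterion. Since $h^0(A,L) = \chi(L) = (L^g)/g!$ on an abelian variety, the numerical hypothesis is precisely the inequality \eqref{degree0} for $B=A$; and a general polarized abelian variety has Picard number one and no nonzero proper abelian subvariety, so \eqref{degree0} in fact holds for every $\{0\}\neq B\subseteq A$. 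Thus what must be shown is exactly Conjecture \ref{conjIntro} specialized to general $(A,L)$ --- where the conjectural bound \eqref{conj000} of \cite{ca} would give the conclusion immediately --- the point of the word ``general'' being that it leaves only one polarization type to control. (For $g=2$ the statement is already contained in Proposition \ref{propsurfaces}: the hypothesis then forces $h^0(A,L)\ge 3$, hence basepoint-freeness of $|L|$ for a general $(A,L)$; so assume $g\ge 3$.)

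Next I would use the upper semicontinuity of $\beta$ in families to reduce, for each admissible type $\underline d=(d_1,\dots,d_g)$ (those with $d_1\cdots d_g>(g(g-1))^g/g!$), to exhibiting one $(A_0,L_0)$ of type $\underline d$ with $\beta(A_0,L_0)<\tau_g$; equivalently, one appeals directly to the unconditional upper bounds for $\beta$ on a general abelian variety of a given type proved in \cite{jipa,ji1} and refined via the generation results of \cite{parANT,parPA,alpa1,alpa2}. These have the form $\beta(A_{\mathrm{gen}},L)\le B(\underline d,g)$ with $B(\underline d,g)\to 0$ as $d_1\cdots d_g\to\infty$, so that plugging in $d_1\cdots d_g>(g(g-1))^g/g!$ forces $\beta<\tau_g$ for every admissible type when $g\le 6$, and for all but finitely many admissible types when $g\ge 7$: the exceptional ones are exactly those whose degree $(L^g)=g!\,h^0(A,L)$ lies in the bounded window between the conjectural threshold $(g(g-1))^g$ and the slightly larger value from which the estimate behind Propositions \ref{proplast0} and \ref{proplast1} already yields $\beta<\tau_g$. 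Imprimitive types $\underline d=m\,\underline{d'}$ cause no difficulty: if $m\ge g-1$ they are covered by Theorem \ref{main}, and otherwise one applies the same bounds to $L'$, using $\beta(A,mL')=m^{-1}\beta(A,L')$ so that only $\beta(A,L')<m\tau_g$ is needed.

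To complete a proof in dimension $g\ge 7$ one is left with those finitely many types, and for each the target is a single explicit instance of the conjecture of \cite{ca}: that $\beta(A,L)<\tau_g$ for a general $(A,L)$ of that type. The natural approach is to degenerate, within the closure of the relevant moduli locus, to a conveniently chosen special abelian variety of that type on which the cohomological rank functions underlying $\beta$ are computable or can be estimated by the techniques of \cite{jipa,ji1,alpa1,alpa2}, and then to invoke semicontinuity of $\beta$ once more. The main obstacle is precisely this: the slack between the \emph{known} unconditional bounds on $\beta$ for a general abelian variety and the conjecturally optimal bound \eqref{conj000}. The available estimates lose a multiplicative factor --- the ``factor $2$'' of Proposition \ref{proplast0}, only partly recovered by Proposition \ref{proplast1} --- which is harmless for small $g$ but leaves an irreducible finite list of low-degree polarization types uncovered once $g\ge 7$; removing it appears to require either a new general bound on $\beta(A,L)$ attaining \eqref{conj000} or a genuinely type-by-type computation for those remaining cases.
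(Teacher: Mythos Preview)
This statement is a \emph{conjecture} in the paper, not a theorem; there is no complete proof to compare against. What the paper does establish is exactly the partial result you arrive at: the conjecture holds for $g\le 6$, and for each $g\ge 7$ it remains open only for finitely many primitive polarization types. Your reduction is the same as the paper's --- use Remark~\ref{rmkS5} (i.e., $\beta(A,L)<\frac{1}{g-1}$ implies hyperbolicity of $|L|$, so $\tau_g=\frac{1}{g-1}$), invoke upper semicontinuity of $\beta$ to pass to a convenient representative of each type, and then appeal to known upper bounds for $\beta$. You also correctly identify why the argument stalls for the residual types in dimension $\ge 7$: the gap between the conjectural bound \eqref{conj000} and what is actually proved.

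Two small corrections. First, the decisive input for $g\le 6$ (and for the ``all but finitely many types'' claim for $g\ge 7$) is \cite[Theorem~1.5]{ji2} and the surrounding discussion in \cite[\S 3]{ji2}, not \cite{jipa,ji1}; and the generation/ampleness papers \cite{parANT,parPA,alpa1,alpa2} are not what bounds $\beta$ here --- they are used elsewhere in the paper for the $|mL|$ statement. Second, your third paragraph is a strategy, not a proof: degenerating to a special abelian variety on which $\beta$ is computable, for each of the remaining types, is precisely the missing step, and nothing in the cited literature currently carries it out. So your proposal should be read as recovering the paper's partial verification and correctly diagnosing the obstruction, rather than as a proof of the conjecture.
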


Beauville \cite{beauville} recently observed that a complete linear system $|L|$ on an abelian variety $A$ has maximal variation,  provided that $L$ is  an ample and globally generated line bundle on $A$. Maximal variation here means that, if $D$ is a general member in $|L|$, there are only finitely many divisors $D' \in |L|$ which are isomorphic to $D$. Thereby, 
the results  of the present paper  yield non-trivial examples of non-isomorphic hyperbolic subvarieties of $A$.

\vskip0.3truecm\noindent\textbf{Acknowledgment.} 
I would like to thank Beppe Pareschi for his interest and for answering my questions about the proof of Lemma \ref{lemmaAP},
and Ariyan Javanpeykar for    Remark \ref{rmkjava},  
 and for his kind guidance through the hyperbolicity literature. 
 The author is a member of GNSAGA-INdAM.

\section{Algebraic hyperbolicity}\label{S2}

 As mentioned in the Introduction, the next definition has been introduced by Demailly \cite{demailly} (see also \cite{jaka}).
\begin{definition}
A  complex projective variety $X$ is said to be \emph{algebraically hyperbolic} if there exist a  real number $\varepsilon > 0$ and an ample line bundle $L$ on $X$ such that, for every  smooth projective curve $C$ and every non-constant morphism $f \colon C \to X$, one has that
\[
2 g(C) - 2 \geq \varepsilon \,  \deg_C (f^* L),
\]
where $g(C)$ is the genus of $C$.
\end{definition}
\noindent In particular, an algebraically hyperbolic variety does not contain any rational and elliptic curves. Demailly \cite{demailly} proved   that a Kobayashi hyperbolic projective variety is algebraically hyperbolic, and conjectured that also the converse implication holds true. This is, in general, quite open. 

Since we are particularly interested in subvarieties of abelian varieties, let us promptly observe the following. 
\begin{remark}\label{rmkjava}
Let  $Y$ be a closed subvariety of an abelian variety $A$.  The following are equivalent:
\begin{enumerate}
\item[1)] $Y$ is Kobayashi hyperbolic;
\item[2)] $Y$ is Brody hyperbolic;
\item[3)] $Y$ is algebraically hyperbolic;
\item[4)] $Y$ does not contain translates of positive-dimensional abelian subvarieties of $A$.  
\end{enumerate}
Indeed, (1) and (2) are equivalent thanks to Brody's theorem \cite{brody}. Item (1) implies (3) by Demailly's theorem \cite[Theorem 2.1]{demailly} (see also the comment below \cite[Theorem 1.2]{jaka}). The implication (3) $\Rightarrow$ (4) is, for instance, implicit in \cite{jaka}. Indeed, \cite[Corollary 4.5]{jaka} says that an algebraically hyperbolic variety  is groupless, that is, it does not admit any non-trivial map from an algebraic group. So, in particular, one gets (4).  
Finally,  (4) implies (2) thanks to the Bloch's theorem mentioned in the Introduction. The equivalence between (1) and (4) was also proved by Green in \cite{green}.
\end{remark}

Quite naturally, an ample and basepoint-free complete linear system $|L|$ on $X$ is said to be \emph{algebraically hyperbolic}, if a very general element of $|L|$ is algebraically hyperbolic. A similar definition may be given with the \emph{Kobayashi hyperbolicity}. Moraga and Yeong conjectured  what follows.
\begin{conjecture}[\cite{moraga}, Conjecture 1.1]\label{conjectureMY}
Let $L$ be an ample line bundle on a smooth projective variety $X$ of dimension $n \geq 2$. Then, the linear system $|K_X + (3n+1)L|$ is algebraically hyperbolic.
\end{conjecture}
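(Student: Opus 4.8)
The plan is to establish algebraic hyperbolicity of a very general member by the variational method of Clemens, Ein and Voisin, transported from projective space to the pair $(X, L)$, rather than by exhibiting one hyperbolic divisor. Write $M = K_X + (3n+1)L$. Since $3n+1 > n+1$, I would first arrange, via Bertini together with the freeness of $|M|$ in this positivity regime, that a general $D \in |M|$ is smooth, and fix the ample line bundle $A = L|_D$ with which to test the definition. The underlying reduction is the derivative estimate: for any non-constant $f \colon C \to D$ from a smooth projective curve, the differential realises $T_C(-R) \hookrightarrow f^* T_D$ as a sub-line-bundle, where $R$ is the ramification, so if one can show that $\Omega_D^1 \otimes A^{-\varepsilon}$ is nef for a fixed $\varepsilon > 0$, then every sub-line-bundle of $f^* T_D \otimes f^* A^{\varepsilon}$ has non-positive degree, and the inequality $2g(C) - 2 \geq \varepsilon \deg_C f^*A$ follows immediately. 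Thus the whole problem concentrates in producing this strictly negative twist of positivity, uniformly over the very general $D$.

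Next I would set up the universal divisor $\mathcal{D} \subseteq X \times B$ with $B = |M|$ and projections $p \colon \mathcal{D} \to X$ and $q \colon \mathcal{D} \to B$, and exploit that for very general $t \in B$ any curve $C \subseteq D_t = q^{-1}(t)$ deforms together with $D_t$. Following Voisin's approach, the deformations of $C$ inside the family are governed by global sections of a twist of the relative tangent sheaf $T_{\mathcal{D}/B}$, and the crucial technical input is a global generation statement: the sheaf controlling these deformations, after twisting down by a fractional power of $p^* L$, should still be globally generated along the fibres. Concretely this reduces to a Fujita-type positivity for the twisted cotangent bundle, namely the global generation of $\Omega_X^1 \otimes K_X \otimes L^{(3n+1)-\varepsilon}$ together with the second-order jet data needed to separate tangent directions on $\mathcal{D}$. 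Granting such generation with a strictly negative $L$-twist, the Clemens--Ein--Voisin positivity computation yields $2g(C) - 2 \geq \varepsilon \deg_C f^* A$ for every curve on $D_t$; the very-general hypothesis on $t$ is exactly what removes the countable union of loci in $B$ over which a curve could fail to deform, or could lie in the degeneracy locus of the generating sections.

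The main obstacle is the sharp global generation in the previous step. Over $X = \mathbb{P}^n$ this is Voisin's lemma with an explicit constant, and the adjoint normalisation $K_X + (3n+1)L$ is precisely calibrated so that the $n+1$ from Fujita freeness, an $n$ for the positivity of $\Omega_X^1$, and a further $n$ for the first-order deformation data combine to leave a positive $L$-twist to spare. For a general polarized $(X, L)$, however, global generation of the \emph{vector} bundle $\Omega_X^1 \otimes K_X \otimes L^{k}$ is a Fujita-type statement presently known only with non-sharp constants, roughly quadratic in $n$, coming from Angehrn--Siu and Demailly; pushing it to the exact linear bound $k = 3n+1-\varepsilon$, while simultaneously controlling the second-order data so that the degeneracy locus is a \emph{proper} subvariety, is the genuinely open core of the conjecture. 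I expect this sharp positivity, and not the variational formalism, which is robust, to be where the argument must either invoke the full Wronskian and jet-differential machinery of Brotbek and Xie or await an effective vector-bundle Fujita theorem; that is the step I would flag as the decisive difficulty.
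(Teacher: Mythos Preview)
The statement you have attempted to prove is stated in the paper as a \emph{conjecture}, not a theorem: it is Conjecture~1.1 of Moraga--Yeong, quoted verbatim and left open. The paper contains no proof of it whatsoever; it is recalled only as motivation for the hyperbolicity criterion of \cite{itoetal} and for the paper's own results on abelian varieties. So there is no ``paper's own proof'' against which to compare your attempt.

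Your write-up is not really a proof proposal but a strategic outline, and you yourself correctly identify the gap: the sharp global-generation statement for $\Omega_X^1 \otimes K_X \otimes L^{k}$ (and the associated second-order data) at the linear threshold $k = 3n+1-\varepsilon$ is, as you say, a vector-bundle Fujita-type theorem that is not known. Everything prior to that step is the standard Clemens--Ein--Voisin formalism and is indeed robust, but the missing ingredient is exactly the content of the conjecture, so the outline does not reduce the problem to anything easier. In short: you have accurately located where the difficulty lies, but you have not proposed a way past it, and neither has the paper---because the statement is genuinely open in the generality asserted.
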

\noindent They  have been motivated by Fujita's conjecture \cite{fuconj}, and by both classical and more recent results on hyperbolicity of (very) general hypersurfaces in projective spaces (see \emph{op.cit.}, the references therein, and \cite{diverio}). They proved their conjecture for smooth projective toric varieties. More recently,  \cite{itoetal} produced an useful criterion to detect the algebraic hyperbolicity of basepoint-free  complete linear systems on a variety $X$ with  \emph{nef} tangent bundle (and possibly mild singularities). We state it here directly when $X = A$ is an  abelian variety, and refer the reader to \cite[Theorem 4.7]{itoetal} for the more general statement.  Its proof  builds on a nowadays standard deformation type argument,  recently simplified and further developed  by Coskun and Riedl in \cite{coskun1, coskun2}. 
\begin{criterion}[\cite{itoetal}]\label{criterion}
Let $A$ be a complex abelian variety of dimension $g \geq 3$. Let $N$ and $L$ be line bundles on $A$, with $N$ globally generated and $L$ ample. If there exists a rational number $\delta > 0$ such that
\begin{enumerate}
\item[\emph{i)}] $M_{N} \la \delta L \ra$ is nef, and \\

\item[\emph{ii)}] $N - \delta(g-2) L$ is ample, 
\end{enumerate} 
then $|N|$ is an algebraically hyperbolic linear system on $A$.
\end{criterion}  
\noindent  Let us recall that $M_{N}$ is the \emph{kernel bundle} of $N$, that is, the kernel of the evaluation morphism of global sections of $N$
\[
0 \to M_N \to H^0(A, N) \otimes \OO_A \to N \to 0\, ,
\] 
and $M_{N} \la \delta L \ra$ denotes the $\Q$-twisted bundle in the sense of Lazarsfeld \cite[\S 6.2A]{laI}.

We plain to apply the above Criterion \ref{criterion} and Remark \ref{rmkjava} to get Theorem \ref{main}. With this in mind, the notions and results of the next section are useful. Before moving on to it, let us consider the easy case of hyperbolicity of linear systems on abelian surfaces.
\begin{proof}[Proof of Proposition \ref{propsurfaces}]
Let $A$ be an abelian surface, and $L$ be an ample and globally generated line bundle on $A$. Take $D \in |L|$ general, so that $D$ is a smooth projective irreducible curve. 
By taking the holomorphic Euler chracteristic 
of its defining short exact sequence 
\[
0 \to \OO_A(-D) \to \OO_A \to \OO_D \to 0\, ,
\]
 we get
\begin{align*}
0 = \chi(A, \OO_A) &= \chi(D, \OO_D) + \chi(A, \OO_A(-D)) \\
&= 1 - g(D) + h^2(A, \OO_A(-D))\, ,
\end{align*}
where $g(D)$ is the genus of $D$. 
Therefore, we have
\[
g(D) = 1 + h^0(A, \OO_A(D)) > 1\, ,
\]
as $D$ is an ample (hence effective) divisor on $A$. 
This means that $D$ is a hyperbolic curve.

If $L$ is not globally generated, we may apply the above reasoning to $|mL|$, which is basepoint-free when $m\geq 2$.
 This concludes the proof of Proposition \ref{propsurfaces}.
\end{proof}

\section{Coherent sheaves on abelian varieties: regularity and generation properties}\label{S3}

Let $(A, L)$ be a polarized abelian variety of dimension $g \geq 1$. 
Given a coherent sheaf $\FF$ on $A$, we denote by 
\[
V^i(A, \FF) := \{ \alpha \in \Pic0 A \ |\ h^i(A, \FF \otimes \alpha) \neq 0 \}
\] 
the $i$\emph{-th} cohomological support loci. These are  Zariski closed subsets of $\Pic0 A$ by upper-semicontinuity. The sheaf 
$\FF$ 
is said to be \emph{GV} (resp.\! \emph{M-regular}) if 
\[
\mathrm{codim}_{\Pic0 A} V^i(A, \FF) \geq i \quad (\mathrm{resp.} > i)
\]
for all $i > 0$.

Such notions turn out to be quite useful in the study of linear systems on abelian varieties (see \cite{ppI}), and 
 have been extended to the $\Q$-twisted setting by Jiang-Pareschi \cite{jipa} as follows: given a $\Q$-twisted sheaf $\FF\la r L \ra$ on $A$, with $r = \frac{a}{b} \in \Q$ and $b > 0$, we say that  $\FF\la r L \ra$ is \emph{GV} (resp.\! \emph{M-regular}) if so is $\mu_b^*(\FF) \otimes L^{ab}$, where 
\[
\mu_b \colon A \to A
\]
is the multiplication-by-$b$ isogeny of $A$. The definition makes sense because $\mu_b^*L$ is algebraically equivalent to $L^{b^2}$, and hence $L^{ab}$ plays the r\^ole of ``$\mu_b^*(\frac{a}{b}L)$''.

Theorems of Debarre \cite{debarre} and Pareschi-Popa \cite{ppIII} say that an $M$-regular sheaf is ample, and a $GV$ sheaf is nef, and it is clear that these implications remain valid in the $\Q$-twisted setting (see, e.g., \cite[Lemma 3.7]{itoetal}). Note also that the converse implications are not true, unless we are dealing with line bundles.
 As observed in \cite[Remark 3.10]{itoetal},  if $m \geq 2$, one has that the $\Q$-twisted sheaf
\begin{equation}\label{twistedM0}
M_{mL} \la \frac{m}{m-1} L \ra
\end{equation}
is  $GV$, and hence it is nef. This is a consequence of the computations performed in \cite[\S 8]{jipa}, involving the Fourier-Mukai-Poincar\'e transform. 
However,  \eqref{twistedM0} may not be $M$-regular, and actually    it is so precisely when $L$ has no base divisors. Indeed, by \cite[Proposition 8.1]{jipa} (see also \cite[Proposition 4.1]{ito0}), one has that \eqref{twistedM0} is $M$-regular if and only if
\[
\II_p \la \frac{1}{m} mL \ra = \II_p \otimes L
\] 
is $M$-regular, where $\II_p$ is the ideal sheaf of a closed point $p \in A$, and this is equivalent to ask that $L$ has no base divisors (see \cite[Example 3.10(3)]{ppsurvey}). 

Let us now explain, for the sake of completeness and self-containedness, how \cite{itoetal} obtained Theorem \ref{thmI} from the above results, 
 by applying 
 their Criterion \ref{criterion} with $N = mL$.  
\begin{proof}[Proof of Theorem \ref{thmI}]
 Fix an integer $m \geq 2$. As we explained above, 
\begin{equation}\label{proofthmI1}
M_{mL}\la \frac{m}{m-1} L \ra 
\end{equation}
is nef, and it is ample if  $L$ has no base divisors. 
Let us assume we are in the latter case, and take a rational number 
 $\delta < \frac{m}{m-1}$ such that $M_{mL}\la \delta L \ra$ is still ample. 
So, we only need to check \emph{(ii)} of the Criterion \ref{criterion}. 
Since
\begin{equation}\label{proofthmI0}
m - \delta (g-2) > m - \frac{m}{m-1} (g-2),    
\end{equation}
one has that $mL - \delta(g-2)L = (m - \delta(g-2))L$ is ample as soon as the right-hand side of \eqref{proofthmI0} is $\geq 0$, and this happens
if and only if $m \geq g-1$. 

If our line bundle $L$ has some base divisor, then we still have that \eqref{proofthmI1} is nef, and we may take $\delta = \frac{m}{m-1}$. In this case, the Criterion \ref{criterion} applies as soon as $(m - \delta(g-2))L$ is ample, that is, as soon as 
\[ 
m - \frac{m}{m-1} (g-2) > 0\, ,
\]
i.e.,  $m \geq g$. 
\end{proof}

\begin{remark}\label{rmkthmI}
More generally, but with essentially the same argument,  \cite[Theorem 1.9]{itoetal} proves   that for a polarized abelian variety $(A, L)$ of dimension $g \geq 3$ and such that 
\begin{equation}\label{rmk00}
\II_p \la \frac{1}{g-1} L\ra \quad \mathrm{is\, \emph{M-}regular,}
\end{equation}
 one has that $|L|$ is   hyperbolic. Indeed, \eqref{rmk00} gives, in particular, that $L$ is globally generated, and one still has in this case, again by \cite{jipa}, that \eqref{rmk00} is equivalent to the $M$-regularity of  $M_{L}\la \frac{1}{g-2} L \ra$. Then, the proof proceeds   in the same way as before. 
\end{remark}

As observed (see \eqref{special1}), Theorem \ref{thmI}\emph{(1)} is optimal. 
However, 
we note that the above proof leaves  room for improvement, as it is used that the $\Q$-twisted sheaf \eqref{proofthmI1} is $M$-regular (and hence ample), while what is really needed is just its   ampleness. In this regard, we recall the following 
\begin{definition}[\cite{parPA}]\label{defPar}
Let $A$ be an abelian variety  of dimension $g \geq 1$, and $Z$ be an irreducible subvariety of $\Pic0 A$. 
A coherent sheaf $\FF$ on $A$ is said to be \emph{generated by} $Z$ if the sum of twisted evaluation maps
\[
\mathrm{ev}_U \colon \bigoplus_{\alpha \in U} H^0(A, \FF \otimes \alpha) \otimes \alpha^{\vee} \to \FF
\] 
is surjective for all non-empty open subset $U \subseteq \Pic0 A$  meeting $Z$. 
\end{definition}
\noindent  A generated sheaf is nef, as it can be written as the quotient of a finite direct sum of numerically trivial line bundles (see \cite[Remark 2.1.4]{parPA}). A theorem of Debarre \cite{debarre} can be stated, using this language, by saying that an $M$-regular sheaf on $A$ is generated by the full $\Pic0 A$ (this is what is commonly called being continuously globally generated). Finally, let us note that a globally generated sheaf is generated by $\{\hat{0}\} \subseteq \Pic0 A$, but the converse does not hold in general.

\noindent  In \cite{parPA}, Pareschi gave an ampleness criterion for generated sheaves.\footnote{We state below a rather special case, which suffices to our aims.}   Let us say that an irreducible subvariety $Z \subseteq \Pic0 A$ \emph{spans} $\Pic0 A$ if there exists an integer  $N \geq 1$ such that
\[
\underbrace{Z + \ldots + Z}_{N} = \Pic0 A\, .
\] 
\begin{theorem}[\cite{parPA}, Theorem A]\label{parcriterion}
Let $\FF$ be a coherent sheaf on $A$. If $\FF$ is generated by an irreducible subvariety spanning $\Pic0 A$, then $\FF$ is ample.
\end{theorem}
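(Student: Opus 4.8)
The plan is to reduce the statement to Debarre's theorem that a continuously globally generated sheaf on an abelian variety is ample (see \cite{debarre}, and the discussion preceding Theorem \ref{parcriterion}, where being ``generated by the whole $\Pic0 A$'' is recorded to be the same as being continuously globally generated). The point is to show that a suitable tensor power of $\FF$ is generated by the whole $\Pic0 A$, and the heart of the argument is a multiplicativity property of the notion of Definition \ref{defPar}.

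First I would prove the following lemma: if $\FF$ is generated by $Z_1$ and $\GG$ is generated by $Z_2$, then $\FF \otimes_{\OO_A} \GG$ is generated by $Z_1 + Z_2$ (an irreducible subvariety of $\Pic0 A$, being the image of $Z_1 \times Z_2$ under the addition map). Fix a nonempty open $W \subseteq \Pic0 A$ meeting $Z_1 + Z_2$, say $z_1 + z_2 \in W$ with $z_i \in Z_i$; by continuity of the group law one can choose open neighbourhoods $U_i \ni z_i$ with $U_1 + U_2 \subseteq W$ (and $U_1 + U_2$ is again open). Tensoring the two surjections $\mathrm{ev}_{U_1}$ and $\mathrm{ev}_{U_2}$ and distributing the tensor product over the direct sums yields a surjection
\[
\bigoplus_{(\alpha, \beta) \in U_1 \times U_2} \big( H^0(A, \FF \otimes \alpha) \otimes_{\C} H^0(A, \GG \otimes \beta) \big) \otimes (\alpha \otimes \beta)^{\vee} \longrightarrow \FF \otimes \GG
\]
which, by compatibility of the evaluation maps with multiplication of global sections, factors through $\mathrm{ev}_W$ for $\FF \otimes \GG$. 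Hence $\mathrm{ev}_W$ is surjective, proving the lemma. (Note that one only knows surjectivity of the summed maps $\mathrm{ev}_{U_i}$, not of the individual twisted evaluations, so the whole argument must be run at the level of these direct sums.)

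Iterating the lemma, $\FF^{\otimes k}$ is generated by $\underbrace{Z + \ldots + Z}_{k}$ for every $k \geq 1$; consequently so is its quotient $S^k \FF$. Since $Z$ spans $\Pic0 A$, there is an integer $N$ with $\underbrace{Z + \ldots + Z}_{N} = \Pic0 A$, so that $\FF^{\otimes N}$ is generated by the whole $\Pic0 A$, i.e.\ it is continuously globally generated. By Debarre's theorem \cite{debarre}, $\FF^{\otimes N}$ is then ample; hence $S^N \FF$, being a quotient of $\FF^{\otimes N}$, is ample as well. Finally, a coherent sheaf $\FF$ is ample if and only if $S^N \FF$ is (via the relative Veronese embedding $\mathbb{P}(\FF) \hookrightarrow \mathbb{P}(S^N \FF)$, under which $\OO_{\mathbb{P}(S^N \FF)}(1)$ pulls back to $\OO_{\mathbb{P}(\FF)}(N)$), so $\FF$ is ample.

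The main obstacle is the bookkeeping in the multiplicativity lemma: one must verify carefully the right-exactness of the tensor product of the relevant (possibly non-locally-free) coherent sheaves, the openness of $U_1 + U_2$ and its containment in $W$, and the commutativity of the square relating the twisted evaluation maps to the cup product $H^0(\FF \otimes \alpha) \otimes H^0(\GG \otimes \beta) \to H^0(\FF \otimes \GG \otimes \alpha \otimes \beta)$. Once the lemma is in place, everything else is formal, resting on the spanning hypothesis and on Debarre's ampleness theorem.
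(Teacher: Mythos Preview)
The paper does not give a proof of this theorem; it merely quotes it as Theorem~A of \cite{parPA}. Your argument is correct and is essentially Pareschi's own in \cite{parPA}: the multiplicativity lemma (generation by $Z_1$ and by $Z_2$ implies generation of the tensor product by $Z_1+Z_2$) iterates, via the spanning hypothesis, to make $\FF^{\otimes N}$ continuously globally generated, whereupon Debarre's theorem and the relative Veronese embedding finish the job; the bookkeeping issues you flag (right-exactness of $\otimes$, openness of $U_1+U_2$ as a union of translates of an open set, and compatibility of cup product with twisted evaluation) are all routine.
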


So, going back to our previous notations, we observed that $M_{mL}\la \frac{m}{m-1} L \ra$ is always $GV$ when $m \geq 2$, and now the problem is to understand \emph{when} it is generated by a spanning subvariety of $\Pic0 A$.

\section{Proof of Theorem \ref{main}}

Let us start be considering a polarized abelian variety $(A, L)$ of dimension $g \geq 1$, for the moment. 
We need to introduce some more notations. In \cite{parANT, alpa1}, the authors noted (and used) that the $\Q$-twisted sheaf $\OO_A \la r L \ra$, with $r \in \Q$, cohomologically behaves  as a certain ``usual''  vector bundle, denoted by $E_{A, r L}$. 
This is a simple semi-homogeneous vector bundle on $A$ such that 
\[
\frac{\det E_{A, r L}}{\rank E_{A, r L}} = r L 
\]
holds in the N\'eron-Severi group $\NS(A)_{\Q}$ of $A$. Its existence was proved  by Mukai \cite[\S 7]{mukai1} (see also \cite[\S 1.5]{alpa1}). It has the following crucial property \cite[Proposition 2.1.1]{alpa1}: 
\begin{equation}\label{crupb}
\mu_b^* E_{A, \frac{a}{b}L} \simeq ({L'}^{ab})^{\oplus \rank E_{A, \frac{a}{b} L}}\, ,     
\end{equation}
where $L'$ is a line bundle on $A$, algebraically equivalent to $L$. 
 This fact says us that, for a $\Q$-twisted sheaf $\FF \la \frac{a}{b} L \ra$, considering the sheaf $\mu_b^* (\FF) \otimes L^{ab}$ as in \S \ref{S3} is not the only option to deal with an ``integral'' sheaf, instead of a $\Q$-twisted one. Another, cohomologically equivalent, possibility is to look at $\FF \otimes E_{A, \frac{a}{b}L}$, because 
\[
\mu_b^* (\FF^{\, \oplus \rank(E_{A, r L})} \otimes E_{A, r L}) = (\mu_b^* \FF \otimes {L'}^{ab})^{\oplus \rank E_{A, r L}}\, .
\] 
We refer the reader to
 \cite{alpa1}, where  the theory coming from this observation is fully developed.

With this terminology in our hands, we may now recall a recent result of Alvarado-Pareschi \cite{alpa2}, which has its roots in \cite{parANT}.
\begin{lemma}[\cite{alpa2}]\label{lemmaAP}
Let $(A, \Theta)$ be an \emph{indecomposable} p.p.a.v.\! of dimension $g \geq 2$. Then, for any integer $m \geq 2$,
the sheaf 
\begin{equation*}\label{eqlemmaAP}
M_{m\Theta} \otimes E_{A, \frac{m}{m-1} \Theta}
\end{equation*}
 is generated by an ample prime divisor $D$ of $\Pic0 A$. 
\end{lemma}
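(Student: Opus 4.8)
The plan is to deduce the generation statement of Lemma \ref{lemmaAP} from the $M$-regularity circle of ideas for indecomposable p.p.a.v.'s, using the translation between the $\Q$-twisted sheaf $M_{m\Theta}\la \frac{m}{m-1}\Theta\ra$ and the integral sheaf $M_{m\Theta}\otimes E_{A,\frac{m}{m-1}\Theta}$ recalled just above. First I would recall, as in Remark \ref{rmkthmI} and \cite{jipa}, that for $m\geq 2$ the $\Q$-twisted sheaf $M_{m\Theta}\la \frac{m}{m-1}\Theta\ra$ is $GV$, and that — after twisting by the appropriate power — its generation/regularity properties are governed by those of $\II_p\otimes\Theta = \II_p\la\frac{1}{m}\cdot m\Theta\ra$, where $\II_p$ is the ideal sheaf of a point. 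Since $(A,\Theta)$ is a principal polarization, $h^0(A,\Theta)=1$, so $\II_p\otimes\Theta$ is \emph{not} $M$-regular (it is only $GV$); this is exactly why one cannot simply invoke Debarre's theorem and conclude generation by all of $\Pic^0 A$, and why one must instead produce an ample prime divisor $D\subseteq\Pic^0 A$ by which the sheaf is generated.

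The key point is to identify $D$. I would use the indecomposability of $(A,\Theta)$: by the Decomposition Theorem the theta divisor $\Theta\subseteq A$ is irreducible, and — passing through the Fourier–Mukai–Poincaré transform as in \cite[\S 8]{jipa} and \cite{parANT} — the locus where $\II_p\otimes\Theta$ fails to be $M$-regular, i.e. the relevant cohomological support locus $V^i$, is a single irreducible divisor which, under $\widehat{A}\cong A$, corresponds to (a translate of) $\Theta$ itself. Concretely, $\widehat{\II_p\otimes\Theta}$ should be (up to twist) the ideal sheaf of $\Theta$ inside $\widehat{A}$, and $\Theta$ being irreducible and ample is precisely the divisor $D$ in the statement. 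Then I would invoke the generation results of \cite{parANT, alpa2}: the failure of $M$-regularity being ``concentrated'' on the single ample prime divisor $D$ forces $M_{m\Theta}\otimes E_{A,\frac{m}{m-1}\Theta}$ to be generated by $D$ in the sense of Definition \ref{defPar}, i.e. the twisted evaluation maps $\mathrm{ev}_U$ are surjective for every open $U$ meeting $D$. The tensoring by $E_{A,\frac{m}{m-1}\Theta}$ is the device that turns the $\Q$-twist into an honest sheaf while preserving cohomology and hence these generation properties, via the identity displayed before \eqref{crupb}.

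The main obstacle — and the step I would expect to be genuinely technical — is the precise identification of the divisor $D$ and the proof that it is prime (irreducible) and ample, together with the verification that ``generation by $D$'' really holds rather than merely ``$M$-regular off $D$''. This requires a careful Fourier–Mukai computation for $\II_p\otimes\Theta$ on an \emph{indecomposable} p.p.a.v., tracking the support loci $V^i(A,\II_p\otimes\Theta\otimes\alpha)$ as $\alpha$ varies, and then feeding the outcome into Pareschi's machinery. Irreducibility of $D$ is where indecomposability of $(A,\Theta)$ is essential: if $\Theta$ decomposed, the corresponding locus in $\widehat{A}$ would split into several components and one could not get a single prime divisor — matching the fact that \eqref{special1} is exactly the excluded case in Theorem \ref{main}. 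Everything else — the $GV$-ness of the twist, the cohomological equivalence of the two ``integralizations,'' and the nefness consequences — is by now standard and can be quoted from \cite{jipa, parPA, alpa1}.
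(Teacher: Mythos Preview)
Your overall shape --- pass to $\II_p\otimes\Theta$, locate the bad locus, and invoke a generation criterion --- is in the right spirit, but the identification of $D$ is wrong and the two hard steps are missing. You claim $D$ should be (a translate of) $\widehat{\Theta}$, coming from $V^1(\II_p\otimes\Theta)$. That locus is indeed a translate of $\widehat{\Theta}$, but it is \emph{not} the divisor for $M_{m\Theta}\otimes E_{A,\frac{m}{m-1}\Theta}$: the dictionary between the two sheaves involves an isogeny, and the support loci transform under it. In the actual argument $D$ is the degeneracy locus of the fiberwise multiplication maps
\[
\nu_\alpha\colon H^0(E_{A,\frac{m}{m-1}\Theta})\otimes H^0(m\Theta\otimes\alpha)\longrightarrow H^0(E_{A,\frac{m^2}{m-1}\Theta}\otimes\alpha),
\]
that is, the zero divisor of $\det\Phi_{\PP}(\twe)$; its class is $m^{2(g-1)}\widehat{\Theta}$, and concretely $D=\varphi_{m\Theta *}(\Theta)$. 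A key point you do not mention is the numerical coincidence $\dim(\text{source})=\dim(\text{target})=m^{2g}$, which is what makes this degeneracy locus a determinantal \emph{divisor} at all and allows the class computation.

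The primeness of $D$ is where the real content lies, and it does \emph{not} follow from irreducibility of $\Theta$ by a soft Fourier--Mukai argument of the kind you sketch. One pulls $D$ back by $\varphi_{m\Theta}$ and invokes \cite[Theorem~A]{parANT} (a generalization of Kempf's multiplication theorem relating $\dim\Coker\nu_p$ to torsion points on $\Theta_p$) to identify the pullback as $\sum_{x\in A[m]}\Theta_x$; only then can one conclude $D=\varphi_{m\Theta *}(\Theta)$ is prime. Finally, ``failure of $M$-regularity concentrated on a prime divisor'' does not by itself force generation by that divisor; one needs the specific criterion \cite[Corollary~6.2.1]{parPA}, applied after all of the above. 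Your outline does not isolate either the Kempf--Pareschi input or the equal-rank determinantal structure, and without these the argument cannot be completed.
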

\noindent Since this Lemma  is proved in 
 \cite[Lemma 3.2.1]{alpa2} with $m = 2$ (where $E_{A, 2 \Theta} = 2 \Theta$), and the details  are left to the reader when $m \geq 3$ (see \cite[\S 5.2]{alpa2}), we will include a sketch of its proof in the Appendix below.

\begin{corollary}\label{corAP}
Under the same assumptions,  the $\Q$-twisted sheaf $M_{m\Theta} \la \frac{m}{m-1}\Theta \ra$ is ample.
\end{corollary}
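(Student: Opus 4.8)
The plan is to deduce Corollary~\ref{corAP} from Lemma~\ref{lemmaAP} via the ampleness criterion Theorem~\ref{parcriterion}, using the dictionary between $\Q$-twisted sheaves and their ``integral'' avatars $\FF\otimes E_{A, rL}$ set up just before the statement. First I would recall that, by Lemma~\ref{lemmaAP}, the sheaf $M_{m\Theta}\otimes E_{A,\frac{m}{m-1}\Theta}$ is generated by an ample prime divisor $D\subseteq\Pic0 A$, in the sense of Definition~\ref{defPar}. Since $D$ is an ample divisor on the abelian variety $\Pic0 A$, it spans $\Pic0 A$: any ample effective divisor on an abelian variety satisfies $D+\cdots+D=\Pic0 A$ for $N$ large (this is a standard fact, following e.g.\ from the fact that an ample divisor is not contained in any translate of a proper abelian subvariety, or directly from Mumford's theory). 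Hence $D$ is an irreducible subvariety spanning $\Pic0 A$, and Theorem~\ref{parcriterion} applies to give that $M_{m\Theta}\otimes E_{A,\frac{m}{m-1}\Theta}$ is ample.

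Next I would transfer this ampleness back to the $\Q$-twisted sheaf $M_{m\Theta}\la\frac{m}{m-1}\Theta\ra$. Write $\frac{m}{m-1}=\frac{a}{b}$ (so $a=m$, $b=m-1$ when $m\geq 2$; for $m=2$ one has $E_{A,2\Theta}=2\Theta$ and the statement is essentially immediate). By \eqref{crupb}, $\mu_b^* E_{A,\frac{a}{b}\Theta}\simeq ({\Theta'}^{ab})^{\oplus r}$ with $r=\rank E_{A,\frac{a}{b}\Theta}$ and $\Theta'$ algebraically equivalent to $\Theta$; combined with the projection-formula identity displayed in the excerpt,
\[
\mu_b^*\big((M_{m\Theta})^{\oplus r}\otimes E_{A,\frac{m}{m-1}\Theta}\big)=\big(\mu_b^* M_{m\Theta}\otimes{\Theta'}^{ab}\big)^{\oplus r}.
\]
The right-hand side is, up to the direct sum, exactly the integral sheaf $\mu_b^*(M_{m\Theta})\otimes \Theta^{ab}$ whose ampleness is by definition the $M$-regularity-free notion of ampleness of the $\Q$-twist $M_{m\Theta}\la\frac{m}{m-1}\Theta\ra$ (cf.\ \cite[Lemma~3.7]{itoetal}). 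Since ampleness is preserved and reflected by finite surjective pullbacks and by passing to/from direct summands and direct sums, ampleness of $M_{m\Theta}\otimes E_{A,\frac{m}{m-1}\Theta}$ is equivalent to ampleness of $\mu_b^*(M_{m\Theta})\otimes\Theta^{ab}$, i.e.\ to ampleness of the $\Q$-twisted sheaf $M_{m\Theta}\la\frac{m}{m-1}\Theta\ra$. This gives the corollary.

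The only genuinely non-formal point is the passage ``generated by $D$ $\Rightarrow$ ample'', and that is precisely what Theorem~\ref{parcriterion} is for, so the main obstacle is reduced to checking the hypothesis of that theorem, namely that the ample prime divisor $D$ spans $\Pic0 A$ — a routine fact about ample divisors on abelian varieties. Everything else is bookkeeping with the $E_{A,rL}$/$\mu_b^*$ dictionary and the stability of ampleness under finite pullback and direct sums. I would phrase the write-up in two short sentences: $D$ spans $\Pic0 A$ because it is ample, so Theorem~\ref{parcriterion} gives ampleness of $M_{m\Theta}\otimes E_{A,\frac{m}{m-1}\Theta}$, which by the equivalence explained in \S\ref{S3} (and \eqref{crupb}) means exactly that $M_{m\Theta}\la\frac{m}{m-1}\Theta\ra$ is ample.
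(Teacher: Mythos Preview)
Your proposal is correct and follows essentially the same route as the paper: apply Lemma~\ref{lemmaAP} together with Theorem~\ref{parcriterion} to get ampleness of $M_{m\Theta}\otimes E_{A,\frac{m}{m-1}\Theta}$, then transfer this to the $\Q$-twisted sheaf via \eqref{crupb} and the behaviour of ampleness under finite pullback and direct summands. The only cosmetic difference is in the verification that the ample prime divisor $D$ spans $\Pic0 A$: you invoke directly the standard fact that an ample divisor on an abelian variety cannot be a translate of a proper abelian subvariety, whereas the paper deduces this from the cohomological inequality $h^1(\OO_D)\geq g>\dim D$ (obtained from the structure sequence of $D$ and Kodaira vanishing for $\OO_{\Pic0 A}(D)$); both arguments are routine and lead to the same conclusion.
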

\begin{proof}
By the above Lemma \ref{lemmaAP} and Theorem  \ref{parcriterion}, $M_{m\Theta} \otimes E_{A, \frac{m}{m-1} \Theta}$ is an ample bundle, if the  generating ample prime divisor $D$  spans $\Pic0 A$. But taking cohomology in
\[
0 \to \OO_{\Pic0 A}(-D) \to \OO_{\Pic0 A} \to \OO_{D} \to 0
\] 
gives
\[
0 = H^{g-1}(\OO_{\Pic0 A}(D)) =  H^1(\OO_{\Pic0 A}(-D)) \to H^1(\OO_{\Pic0 A}) \to H^1(\OO_{D})\, ,
\]
that is, $h^1(\OO_D) \geq H^1(\OO_{\Pic0 A}) = g > g-1 = \dim D$. Hence, $D$ cannot be a translation of an abelian subvariety of $\Pic0 A$.  
 It follows thereby (see, e.g., \cite[p.\! 109]{ueno})  that, for dimensional reasons, $D$ spans $\Pic0 A$. 

Now, thanks to \eqref{crupb}, we have
\begin{equation}\label{eqlast0}
\mu_{m-1}^*(M_{m\Theta} \otimes E_{A, \frac{m}{m-1} \Theta}) \simeq \mu_{m-1}^*(M_{m\Theta}) \otimes ({{\Theta}'}^{\, m(m-1)})^{\oplus \rank E_{A, \frac{m}{m-1} \Theta}}\, . 
\end{equation}
Therefore, by standard properties on ampleness of vector bundles, we get that 
\[
\mu_{m-1}^*(M_{m\Theta}) \otimes {\Theta}^{\, m(m-1)} = \mu_{m-1}^* (M_{m\Theta} \la \frac{m}{m-1}\Theta \ra)
\]
is ample. Since $\mu_{m-1} \colon A \to A$ is finite and surjective, one has that $M_{m\Theta} \la \frac{m}{m-1}\Theta \ra$ is ample as well (see \cite[Lemma 6.2.8]{laII}).
\end{proof}

\begin{proof}[Proof of the special case \ref{specialcase}]
It follows at once from the above Corollary \ref{corAP} and the Criterion \ref{criterion}, as in the proof of Theorem \ref{thmI}. Namely, we take an indecomposable p.p.a.v.\! $(A, \Theta)$ of dimension $g \geq 3$. Thanks to Corollary \ref{corAP}, there exists a rational number 
$0 < \delta < \frac{m}{m-1}$, where $m \geq 2$, such that 
$M_{m\Theta} \la \delta \Theta \ra$ is ample. Since, as soon as $m \geq g-1$, one has $m - \delta (g-2) > m - \frac{m}{m-1}(g-2) \geq 0$,  we get that
$(m - \delta(g-2))\Theta$ is ample. Therefore,  we may apply the Criterion \ref{criterion} -- and Remark \ref{rmkjava} -- to $|m\Theta|$, if $m \geq g-1$.  
\end{proof}

We are finally ready to give the 
\begin{proof}[Proof of Theorem \ref{main}]
Let $(A, L)$ be a polarized abelian variety of dimension $g \geq 3$, that is not isomorphic, as a polarized abelian variety, to a product 
of a principally polarized elliptic curve with another polarized abelian variety 
as in \eqref{special1}. 
 By the Decomposition Theorem (\cite[\S 4.3]{birk}, or \cite[\S 10.3]{kempf}), we have that
\[
(A, L) \simeq (B, N) \times (A_1, \Theta_1) \times \ldots \times (A_k, \Theta_k)\, ,
\]
where $N$ is an ample line bundle without base divisors on an abelian variety $B$, and the $(A_i, \Theta_i)$'s are indecomposable p.p.a.v.'s of dimension $\geq 2$. 

Thanks to the discussion below \eqref{twistedM0}, one has that, if $m \geq 2$, $M_{mN} \la \frac{m}{m-1} N \ra$ is $M$-regular, and hence ample,  on $B$.
On the other hand, thanks to Corollary \ref{corAP}, 
$M_{m\Theta_i} \la \frac{m}{m-1} \Theta_i \ra$ is ample on $A_i$ for all $i = 1, \ldots, k$. 
Let $0 < \delta < \frac{m}{m-1}$ be a rational number such that all these $\Q$-twisted bundles are still ample.  
Then, it follows easily (see, e.g., \cite[Proposition 3.4]{itoetal}) that
\[
M_{m(N \, \boxtimes\, \Theta_1 \, \boxtimes\,  \ldots \, \boxtimes \, \Theta_k)} \la \delta (N\boxtimes \Theta_1 \boxtimes \ldots \boxtimes \Theta_k)\ra
\simeq M_{mL} \la \delta L \ra
\]
is a nef $\Q$-twisted bundle. Since 
 $m  - \delta (g-2) > m - \frac{m}{m-1}(g-2) \geq 0$, 
where the second inequality holds true as we are assuming $m \geq g-1$, we get that $$mL - \delta(g-2)L$$ is ample. Hence, it only rests to apply the Criterion \ref{criterion} of \cite{itoetal}, and Remark \ref{rmkjava}. 
\end{proof}

\section{The basepoint-freeness threshold and hyperbolicity of linear systems}\label{lastS}
Let $(A, L)$ be a polarized abelian variety of dimension $g \geq 1$, and $\II_p$ be the ideal sheaf of a closed point $p \in A$. Then, the \emph{basepoint-freeness threshold}  of $(A, L)$ is
\[
\beta(A, L) := \Inf \{r \in \Q_{> 0} \ |\ \II_{p}\la r L \ra\,  \textrm{is \emph{GV}} \}\, .
\]
It is known that $L$ is basepoint-free if and only if $\beta(A, L) < 1$, and 
 moreover, if $\beta(A, L) < r_0$, then $\II_{p}\la r_0 L \ra$ is $M$-regular (see \cite[Theorem 5.2]{jipa}). Hence, we have
\begin{remark}\label{rmkS5}
If $\beta(A, L) < \frac{1}{g-1}$, with $g \geq 2$, then
$|L|$ is     hyperbolic. Indeed, 
this follows from Proposition \ref{propsurfaces} when $g = 2$, and from \cite[Theorem 1.9]{itoetal} (see also Remark \ref{rmkthmI} above) when $g \geq 3$. 
\end{remark}

On the other hand, it is expected by \cite{ca} that the following geometric upper bound holds 
\begin{equation}\label{conj000}
\beta(A, L) \leq \mathrm{Max}_{B} \, \frac{\dim B}{\sqrt[\dim B]{((L^{\dim B} \cdot B)}}\, ,
\end{equation}
where $B$ varies among all non-zero abelian subvarieties of $A$.\footnote{Note that we really  have a Max and not only a Sup here. This is because, given $A$, there are, up to isomorphism, only finitely many abelian subvarieties in $A$ (see \cite{lenstra}). So we reduce to deal with two isomorphic $d$-dimensional abelian subvarieties $B$ and $B'$ of $A$ such  that $(L^{d} \cdot B) \neq (L^{d} \cdot B')$.  The  $B'$'s that  are relevant to the computing of the Sup in \eqref{conj000} are just those  with $(L^{d} \cdot B) > (L^{d} \cdot B')$. Therefore, once  $B$ is fixed, we only have  a finite number of possibilities for the quotient appearing in the Sup.   } This has been verified for abelian surfaces \cite{ca},   for abelian $3$-folds \cite{itoAG},   
  ``almost'' for abelian $4$-folds \cite[Theorem 1.6]{ji2}, and in arbitrary dimension 
   ``up to a factor $2$'' \cite[Theorem 1.4]{ji1}. Namely,  
\begin{theorem}
\begin{enumerate}
\item[\emph{1)}\cite{ji2}] If $g = 4$, one has
\[
\beta(A, L) \leq  \mathrm{Max} \{\frac{4.31}{\sqrt[4]{((L^4)}}, \frac{\dim B}{\sqrt[\dim B]{((L^{\dim B} \cdot B)}} \ | \ \{0\} \neq B \subsetneq A \}\, ;
\]

\item[\emph{2)}\cite{ji1}] When $g$ is arbitrary, 
\begin{equation}\label{conj001}
\beta(A, L) \leq \mathrm{Max}_{B} \, \frac{2\, \dim B}{\sqrt[\dim B]{((L^{\dim B} \cdot B)}}\, .
\end{equation}
\end{enumerate}
\end{theorem}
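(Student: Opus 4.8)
The plan is to read the two inequalities as bounds on the cohomological rank function of the ideal sheaf $\II_0$ of the origin $0 \in A$ — which we may take as the point in the definition of $\beta$, by translation-invariance — following Jiang \cite{ji2, ji1}. First I would reformulate $\beta(A,L)$ purely cohomologically. Since $L$ is ample, $L^{ab}$ is ample for all integers $a,b>0$, so the sequence $0 \to \mu_b^*(\II_0)\otimes L^{ab}\otimes\alpha \to L^{ab}\otimes\alpha \to \OO_{A[b]} \to 0$ shows that $V^i(\mu_b^*(\II_0)\otimes L^{ab}\otimes\alpha) = \emptyset$ for all $i\geq 2$; hence $GV$-ness of $\II_0\langle rL\rangle$ reduces to the single vanishing $h^1(A, \mu_b^*(\II_0)\otimes L^{ab}\otimes\alpha)=0$ for general $\alpha\in\Pic0 A$ (with $r=a/b$), and $\beta(A,L) = \inf\{r\in\Q_{>0} : h^1_L(\II_0\langle rL\rangle) = 0\}$, where $r \mapsto h^1_L(\II_0\langle rL\rangle)$ is the continuous, piecewise-polynomial cohomological rank function of Jiang--Pareschi \cite{jipa}. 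Together with the Euler-characteristic identity $h^0_L(\II_0\langle rL\rangle) - h^1_L(\II_0\langle rL\rangle) = r^g (L^g)/g! - 1$, the task becomes: once $r=a/b$ exceeds the claimed bound, the $b$-torsion subgroup $A[b] = \mu_b^{-1}(0)$, of size $b^{2g}$, must impose independent conditions on $L^{ab}\otimes\alpha$ for general $\alpha$, asymptotically as $b\to\infty$.

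The second step is a reduction to the case in which $(A,L)$ is \emph{simple}, i.e. $A$ has no nonzero proper abelian subvariety — this is what is responsible for the term $B=A$ in the two maxima. Given an abelian subvariety $\iota\colon B \hookrightarrow A$ (passing through $0$) with quotient $q\colon A \to A/B$, one compares the cohomological rank functions of $\II_0\langle rL\rangle$ on $A$ with those of $\iota^*L$ on $B$ and of the induced polarization on $A/B$, using restriction along $\iota$ and pushforward along $q$; iterating the resulting inequality along a chain of abelian subvarieties of $A$ bounds $\beta(A,L)$ by a maximum, over the abelian subvarieties $B\subseteq A$, of quantities governed by the degrees $(L^{\dim B}\cdot B)$. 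It then suffices to establish, for \emph{simple} polarized abelian varieties $(A_0, L_0)$, a bound of the shape $\beta(A_0,L_0) \leq c\cdot\dim A_0 / (L_0^{\dim A_0})^{1/\dim A_0}$, conjecturally with $c=1$.

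For item (2) one proves this with $c=2$ via a positivity estimate uniform in the dimension: combining the known lower bounds for Seshadri constants of polarized abelian varieties (see \cite{laI, nakamaye}) with the translation of $\beta$ into separation of jets at torsion points — or else arguing directly by Riemann--Roch on the $L^{ab}$ — one checks that $a/b > 2g/(L^g)^{1/g}$ already forces $A[b]$ to impose independent conditions. For item (1) one instead works out dimension $4$ essentially by hand: the conjecture being known exactly for $g\leq 3$ (see \cite{ca, itoAG}), only the genuinely four-dimensional contribution and the contributions of abelian subvarieties of dimension $2$ and $3$ remain, and a careful analysis of the cohomological rank functions in this narrow range — controlled by the explicit behaviour in dimensions $\leq 3$ and on products — produces the constant $4.31$ in place of the expected $4$.

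The step I expect to be the main obstacle is the simple case: when $A$ has no proper abelian subvariety there is nothing to restrict to, and one must extract the positivity of $\II_0\langle rL\rangle$ from the single numerical invariant $(L^g)$. The available tools — lower bounds for Seshadri constants, and bootstrapping $M$-regularity through the Fourier--Mukai--Poincar\'e transform — are not sharp, which is precisely why the constants $4.31$ and $2$ occur instead of the conjectural $4$ and $1$, and why the full conjecture \eqref{conj000} is still open.
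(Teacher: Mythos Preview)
The paper does not prove this theorem: it is quoted verbatim from Jiang's papers \cite{ji1,ji2} and used as a black box to derive Proposition~\ref{propIntro00} and the surrounding discussion. So there is no ``paper's own proof'' to compare against; your proposal is really a sketch of what you expect the arguments in \cite{ji1,ji2} to be.

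As such a sketch, the opening reformulation via cohomological rank functions is correct and is indeed the language of \cite{jipa,ji1}. However, the methods you describe for the core estimates do not match Jiang's. For item~(2), \cite{ji1} does not go through Seshadri constants or a direct Riemann--Roch count on $A[b]$; the bound comes from a Fourier--Mukai/cohomological-rank-function argument controlling $h^1_{\II_0}$ in terms of intersection numbers on abelian subvarieties. For item~(1), the title of \cite{ji2} already indicates the actual technique: it is an adaptation of the Angehrn--Siu--Helmke method, i.e.\ constructing highly singular $\Q$-divisors in $|tL|$ and using Nadel vanishing for the associated multiplier ideals to force separation of points, with the constant $4.31$ arising from the numerics of that inductive cutting-down procedure in dimension $4$ --- not from a direct ``by hand'' computation of cohomological rank functions. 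Your reduction-to-simple step is in the right spirit, but the hard input in the simple case is this multiplier-ideal machinery rather than the Seshadri-constant bounds you invoke.
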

\noindent  Therefore, by requiring that the right-hand side of \eqref{conj001} is $< \frac{1}{g-1}$, one directly obtains: 
\begin{proposition}\label{propIntro00}
Let $(A, L)$ be a polarized abelian variety of dimension $g \geq 2$.  If
\begin{equation*}\label{degree1}
(L^{\dim B} \cdot B) > (2\, (g-1)\dim B)^{\dim B}
\end{equation*}
for all abelian subvarieties $\{0\} \neq B \subseteq A$, then $|L|$ is Kobayashi hyperbolic.
\end{proposition}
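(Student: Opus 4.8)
The plan is to derive this as an immediate consequence of the bound \eqref{conj001} of \cite{ji1} together with the hyperbolicity criterion recorded in Remark \ref{rmkS5}. First I would observe that, by Remark \ref{rmkS5}, it suffices to prove the single inequality $\beta(A, L) < \frac{1}{g-1}$; indeed Remark \ref{rmkS5} covers both $g = 2$ (via Proposition \ref{propsurfaces}) and $g \geq 3$ (via \cite[Theorem 1.9]{itoetal}, equivalently Remark \ref{rmkthmI}). So the whole proposition reduces to a numerical estimate on the basepoint-freeness threshold.

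Next I would invoke part (2) of the Theorem preceding the statement, namely
\[
\beta(A, L) \leq \mathrm{Max}_{B}\, \frac{2\dim B}{\sqrt[\dim B]{(L^{\dim B}\cdot B)}}\, ,
\]
the maximum being over abelian subvarieties $\{0\} \neq B \subseteq A$ (a genuine maximum, since $A$ has only finitely many abelian subvarieties, as recalled in the footnote to \eqref{conj000}). It is then enough to check that each term of this maximum is strictly less than $\frac{1}{g-1}$. Fixing such a $B$ and setting $d := \dim B \geq 1$, the inequality $\frac{2d}{\sqrt[d]{(L^d\cdot B)}} < \frac{1}{g-1}$ is, after clearing denominators and raising to the $d$-th power (all quantities being positive), exactly $(L^d\cdot B) > (2(g-1)d)^d = (2(g-1)\dim B)^{\dim B}$, which is precisely the hypothesis. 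Hence the maximum is $< \frac{1}{g-1}$, so $\beta(A, L) < \frac{1}{g-1}$, and Remark \ref{rmkS5} concludes the argument.

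There is no real obstacle here: all the substance is imported, the bound on $\beta(A,L)$ coming from \cite{ji1} and the passage from $\beta(A, L) < \frac{1}{g-1}$ to Kobayashi hyperbolicity of $|L|$ coming from \cite{jipa} and \cite{itoetal} (both packaged in Remark \ref{rmkS5}), with the equivalence between Kobayashi, Brody and algebraic hyperbolicity on subvarieties of abelian varieties supplied by Remark \ref{rmkjava}. The only point requiring a small amount of care is that the \emph{strict} inequality in the hypothesis is exactly what is needed to push each term of the maximum strictly below the threshold $\frac{1}{g-1}$, rather than merely $\leq$, so that the criterion of Remark \ref{rmkS5} genuinely applies.
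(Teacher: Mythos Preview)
Your proposal is correct and follows exactly the paper's approach: the paper simply remarks that requiring the right-hand side of \eqref{conj001} to be $< \frac{1}{g-1}$ yields the proposition via Remark \ref{rmkS5}, and you have unpacked this one-line observation faithfully.
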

\noindent  In the same vein, Conjecture \ref{conjIntro} of the Introduction would follow at once from \eqref{conj000}, and hence 
it is essentially verified  when $g \leq 4$.

In \cite{ji2}, Jiang also proved the following estimate for $\beta(A, L)$,  when $A$ is \emph{simple}: 
\begin{theorem}[\cite{ji2}, Theorems 1.6, 1.8 and 1.9]\label{ji2alpha} 
If $(A, L)$ is a simple polarized abelian variety of dimension $g \geq 4$, there exists an \emph{explicit} real constant $\alpha_g > 0$ such that
\begin{equation}\label{conj002}
\beta(A, L) \leq \frac{2g - \alpha_g}{\sqrt[g]{((L^{g})}}\, .
\end{equation}
\end{theorem}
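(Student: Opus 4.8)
This is a result of Jiang, and the natural framework is his theory of cohomological rank functions on abelian varieties; the plan is to organise the argument around that. First I would translate the desired inequality into a statement about the cohomological rank function $x \mapsto h^0_{\II_0}(xL)$ of the ideal sheaf of the origin $0 \in A$, which by the isogeny normalization of Jiang--Pareschi \cite{jipa} is a continuous, piecewise-polynomial function of $x \in \R$. The threshold $\beta = \beta(A,L)$ is exactly the value at which $\II_0 \la xL \ra$ first becomes GV --- and $\II_0\la\beta L\ra$ is then GV but not $M$-regular, by continuity and minimality of $\beta$. For $x \gg 0$ one has the closed form $h^0_{\II_0}(xL) = \tfrac{x^g}{g!}(L^g) - 1$ because the higher cohomology vanishes, while the function vanishes identically on $(-\infty,0]$. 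Hence bounding $\beta$ from above amounts to controlling how slowly this function can leave $0$ and rise toward its eventual polynomial growth, i.e.\ bounding the slopes of $h^0_{\II_0}(\,\cdot\,L)$ on $[0,\beta]$ in terms of $(L^g)$.

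Next I would recall and then scrutinize the factor-$2$ bound $\beta(A,L) \le 2g/\sqrt[g]{(L^g)}$ of \cite{ji1}. Its mechanism is: from $0 \to \II_0 \to \OO_A \to \OO_0 \to 0$ one obtains control of $h^0_{\II_0}(xL) + h^0_{\II_0}((2\beta - x)L)$ in terms of the value at $\beta$; pulling back along $\mu_2$, with $\mu_2^* \II_0 = \II_{A[2]}$, one converts the failure of $M$-regularity of $\II_0\la \beta L\ra$ into a cohomology vanishing which, matched against the endpoint value $\tfrac{\beta^g}{g!}(L^g) - 1$, forces $\beta^g (L^g) \le (2g)^g$. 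I would redo this computation carefully and pin down precisely where it is lossy --- that is, which borderline shapes of the rank function on $[0,\beta]$ would make it sharp.

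The improvement to $2g - \alpha_g$ then comes from feeding in the hypothesis that $A$ is \emph{simple}: its dual $\widehat A = \Pic0 A$ has no nonzero proper abelian subvariety, so the cohomological support loci $V^i(A, \II_0\la\beta L\ra) \subseteq \widehat A$ cannot be translates of positive-dimensional abelian subvarieties. Via the Fourier--Mukai / generic-vanishing structure of GV sheaves, this rules out exactly the extremal configurations for which the factor-$2$ estimate is attained, and forces the piecewise-polynomial function $h^0_{\II_0}(\,\cdot\,L)$ to acquire an additional corner on $(0,\beta)$ --- or a strictly sub-extremal leading term at $\beta$ --- whose slope defect can be bounded below by an explicit function of $g$ alone. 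Summing slope contributions over $[0,\beta]$ and comparing with the known endpoint value then upgrades $2g$ to $2g - \alpha_g$, with $\alpha_g > 0$ explicit (and necessarily tending to $0$ as $g \to \infty$, which is why one only extracts something useful for $g \ge 4$).

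The hard part will be making $\alpha_g$ \emph{explicit and strictly positive}: one must carry out a quantitative classification of the possible shapes of the cohomological rank function near the critical value $x = \beta$ that remain compatible with the absence of abelian subvarieties of $\widehat A$, turning the qualitative non-degeneracy handed over by simplicity into a concrete numerical gain. This case analysis --- excluding the borderline configurations and, more importantly, quantifying how far the genuine one must sit from them --- is the technical core of \cite{ji2}, and I do not see a shortcut around it.
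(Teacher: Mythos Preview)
The paper does not prove this theorem at all: it is quoted verbatim from \cite{ji2} and used as a black box in the proof of Proposition~\ref{proplast1}. So there is no ``paper's own proof'' to compare against here.

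That said, your sketch also does not match the method actually used in \cite{ji2}. The title of that paper is ``Angehrn--Siu--Helmke's method applied to abelian varieties'': the argument there is in the multiplier-ideal / singular-divisor tradition (construct a $\Q$-divisor numerically equivalent to $tL$ with a prescribed log-canonical singularity at a point, cut down the non-klt locus by induction on its dimension, and use Nadel-type vanishing), adapted to abelian varieties so as to bound $\beta(A,L)$. Simplicity enters because in the inductive cutting-down step one needs the minimal log-canonical centre through the point not to be (or be swept out by) a translate of an abelian subvariety; on a simple $A$ this obstruction disappears and the Angehrn--Siu--Helmke numerics go through, producing an explicit constant strictly smaller than $2g$.

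Your plan instead tries to sharpen the factor-$2$ argument of \cite{ji1} by a finer analysis of the cohomological rank function $x \mapsto h^0_{\II_0}(xL)$ and of the support loci at $x=\beta$. This is a different route, and as written it has a real gap: the passage from ``$A$ simple $\Rightarrow$ the $V^i$ are not positive-dimensional subtori'' to ``an explicit positive slope defect $\alpha_g$'' is asserted but not supplied. Knowing that the extremal shape of the rank function is not attained does not by itself give a uniform quantitative distance from it; one would need an effective classification of the possible piecewise-polynomial behaviours near $\beta$ with numerical lower bounds, and you acknowledge you do not have this. So the proposal is at best a heuristic for why some improvement should exist, not a proof yielding an explicit $\alpha_g$, and it is not the mechanism of \cite{ji2}.
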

\noindent Even if, on a simple abelian variety, any basepoint-free  complete  linear system is hyperbolic by Remark \ref{rmkjava}, such bound is useful to get results about the  hyperbolicity of linear systems on \emph{general} polarized abelian varieties. 
\begin{proposition}\label{proplast1}
Let $(A, L)$ be a general polarized abelian variety of dimension $g \geq 4$ and type $(d_1, \ldots, d_g)$.  If
\begin{equation*}\label{degree1}
h^0(A, L) > \frac{((2g- \alpha_g)(g-1))^g}{g!}\, ,
\end{equation*}
 then $|L|$ is Kobayashi hyperbolic.
\end{proposition}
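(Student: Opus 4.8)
The plan is to combine Jiang's estimate \eqref{conj002} for simple abelian varieties with a semicontinuity argument for the basepoint-freeness threshold, exactly in the spirit of Proposition \ref{proplast0}. First I would recall that the moduli space $\AA_{(d_1,\dots,d_g)}$ of polarized abelian varieties of the given type is irreducible, and that a \emph{general} member is simple. Hence for a general $(A,L)$ of type $(d_1,\dots,d_g)$ we may apply Theorem \ref{ji2alpha} to obtain
\[
\beta(A,L) \leq \frac{2g-\alpha_g}{\sqrt[g]{(L^g)}}\, .
\]

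Next I would unwind the numerology. By Remark \ref{rmkS5}, it suffices to ensure $\beta(A,L) < \tfrac{1}{g-1}$, and by the displayed bound this is guaranteed once
\[
(2g-\alpha_g)(g-1) < \sqrt[g]{(L^g)}\, ,
\]
i.e.\ once $(L^g) > ((2g-\alpha_g)(g-1))^g$. Now I invoke the standard fact that for a polarized abelian variety of type $(d_1,\dots,d_g)$ one has $h^0(A,L) = d_1\cdots d_g$ and $(L^g) = g!\, d_1\cdots d_g = g!\, h^0(A,L)$ (see \cite[\S 3.6]{birk}). Substituting, the condition $(L^g) > ((2g-\alpha_g)(g-1))^g$ becomes precisely
\[
h^0(A,L) > \frac{((2g-\alpha_g)(g-1))^g}{g!}\, ,
\]
which is the hypothesis. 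Therefore $\beta(A,L) < \tfrac{1}{g-1}$, and Remark \ref{rmkS5} gives that $|L|$ is Kobayashi hyperbolic.

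One subtlety to address carefully: Theorem \ref{ji2alpha} is stated for \emph{simple} polarized abelian varieties, and "general" must be interpreted so that the general member of the relevant moduli space is simple — this is classical, since the non-simple locus is a countable union of proper closed subsets. Alternatively, if one wants the statement to hold for a genuinely general $(A,L)$ without passing through simplicity, one could instead use the semicontinuity of $\beta$ (as in the proof of Proposition \ref{proplast0}): it is enough to exhibit \emph{one} polarized abelian variety of the given type with $\beta < \tfrac{1}{g-1}$, and a simple one with $(L^g)$ large enough does the job via \eqref{conj002}; then $\beta(A,L) < \tfrac1{g-1}$ holds for $(A,L)$ in a (dense) open subset of $\AA_{(d_1,\dots,d_g)}$, i.e.\ for $(A,L)$ general.

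I do not expect any real obstacle here — the proof is a direct substitution once the right inputs (Jiang's bound, the formula $(L^g) = g!\, h^0(A,L)$, and Remark \ref{rmkS5}) are lined up. The only point requiring a sentence of care is the passage from "simple" to "general," and the observation that $\alpha_g$ is an \emph{explicit} positive constant so that the bound in the statement is genuinely better (smaller threshold on $h^0$) than the one of Proposition \ref{proplast0}, which used the cruder estimate $\beta(A,L) \leq 2g/\sqrt[g]{(L^g)}$ valid for all (general) $(A,L)$.
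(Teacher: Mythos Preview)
Your proposal is correct and matches the paper's argument. The paper's proof is precisely your ``alternative'' route: pick one simple $(A_0,L_0)$ of the given type, apply Jiang's bound \eqref{conj002} together with $(L_0^g)=g!\,h^0(A_0,L_0)$ to get $\beta(A_0,L_0)\le r$ for some rational $r<\tfrac{1}{g-1}$, and then invoke upper semicontinuity of $\beta$ to propagate this to a Zariski open set, concluding via Remark~\ref{rmkS5}. Your primary presentation (apply Jiang directly to the general member, which is simple) only yields the result for \emph{very general} $(A,L)$, since simplicity is the complement of a countable union of proper closed loci; you correctly flag this and supply the semicontinuity fix, which is exactly what the paper does.
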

\begin{proof}
By Theorem \ref{ji2alpha},  if $(A_0, L_0)$ is a simple polarized abelian variety of dimension $g$ and type $(d_1, \ldots, d_g)$, then
\[
\beta(A_0, L_0) \leq \frac{2g - \alpha_g}{\sqrt[g]{(L_0^{g})}} = \frac{2g - \alpha_g}{\sqrt[g]{g!\, h^0(A, L)}} \leq r\, ,
\]
where $r < \frac{1}{g-1}$ is a rational number whose existence is guaranteed by our assumption.
It is not difficult to see that the basepoint-freeness threshold is upper semi-continuous (see, e.g., \cite[Theorem 3.1]{itoNachr}), 
 hence 
\begin{equation*}\label{conj0011}
\beta(A, L) \leq r
\end{equation*}
for a general polarized abelian variety $(A, L)$ of dimension $g$ and of type $(d_1, \ldots, d_g)$. 
Since $r < \frac{1}{g-1}$, we get the hyperbolicity of $|L|$ by Remark \ref{rmkS5}. 
\end{proof}
In particular, one obtains at once Proposition \ref{proplast0} of the Introduction.  
Similar (and sometimes better) results can  also be obtained from \cite[Theorem 3.1]{ji2}. Moreover, 
the same reasoning suggests the following weaker version of Conjecture \ref{conjIntro}:
\begin{conjecture}\label{weakconj}
Let $(A, L)$ be a general polarized abelian variety of dimension $g \geq 2$.  If
\begin{equation*}
h^0(A, L) > \frac{(g\, (g-1))^g}{g!}\, ,
\end{equation*}
 then $|L|$ is Kobayashi hyperbolic.
\end{conjecture}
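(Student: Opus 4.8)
The plan is to run the argument of Proposition~\ref{proplast1}, with the sharpest available upper bound on the basepoint-freeness threshold in place of Jiang's estimate. Fix a polarization type $(d_1, \ldots, d_g)$ and recall that $(L^g) = g!\, h^0(A,L)$, so that the hypothesis $h^0(A,L) > (g(g-1))^g / g!$ is equivalent to $(L^g) > (g(g-1))^g$, i.e. to $g\,(L^g)^{-1/g} < \tfrac{1}{g-1}$. By Remark~\ref{rmkS5} it suffices to prove $\beta(A,L) < \tfrac{1}{g-1}$ for a general $(A,L)$ of this type, and since $\beta$ is upper semicontinuous on the corresponding moduli space (\cite[Theorem~3.1]{itoNachr}), it is enough to exhibit a single \emph{simple} polarized abelian variety $(A_0, L_0)$ of that type with $\beta(A_0, L_0) < \tfrac{1}{g-1}$; the general member of the moduli space is simple, so such $(A_0, L_0)$ is available, and for it the only non-zero abelian subvariety is $A_0$ itself, which removes all the ``subvariety terms'' from the bounds below.

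In dimension $g \le 3$ the geometric bound \eqref{conj000} is a theorem (\cite{ca} for $g=2$, \cite{itoAG} for $g=3$), and in the simple case it reads $\beta(A_0,L_0) \le g\,(L_0^g)^{-1/g}$; combined with the numerical translation above, this gives $\beta(A_0,L_0) < \tfrac{1}{g-1}$, so the statement holds unconditionally for $g \le 3$. (For $g=2$ one may instead invoke Proposition~\ref{propsurfaces} directly, since $h^0(A,L) \ge 3$ forces $L$ to be basepoint-free.)

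For $g \ge 4$ one replaces \eqref{conj000} by its currently known partial forms: Theorem~\ref{ji2alpha}, \cite[Theorem~1.6]{ji2}, and \cite[Theorem~3.1]{ji2}. In the simple case these are of the shape $\beta(A_0,L_0) \le c_g\,(L_0^g)^{-1/g}$ with an explicit constant $c_g < 2g$ (for instance $c_4 = 4.31$, and $c_g = 2g - \alpha_g$ in general). This already yields $\beta(A_0,L_0) < \tfrac{1}{g-1}$ whenever $(L_0^g) > (c_g(g-1))^g$, so only the finitely many types with
\[
(g(g-1))^g < (L^g) \le (c_g(g-1))^g
\]
remain. For each of them one checks $\beta < \tfrac{1}{g-1}$ individually, using the sharper type-dependent estimates of \cite[Theorem~3.1]{ji2} (or, when simpler, verifying directly that a general divisor of $|L_0|$ contains no translated abelian subvariety of $A_0$, whence hyperbolicity by Remark~\ref{rmkjava}). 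This finite case analysis succeeds for all $g \le 6$; for $g \ge 7$ it succeeds for every type except a finite (explicitly bounded) list in each dimension — the types for which the statement is still open.

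The main obstacle is precisely the discrepancy between the conjectural constant $g$ in \eqref{conj000} and the best unconditional constant $c_g \sim 2g$: were \eqref{conj000} known, the semicontinuity argument would prove the statement in full. As it stands one must treat the ``small'' polarization types by hand, and for $g \ge 7$ the available bounds are not yet strong enough to dispose of all of them.
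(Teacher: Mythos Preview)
The statement you are attempting to prove is labeled as a \emph{conjecture} in the paper, not a theorem: the paper does not claim a proof, and indeed remarks immediately after stating it that, thanks to \cite[Theorem~1.5]{ji2}, the conjecture holds for $g \le 6$ and remains open only for finitely many primitive polarization types in each dimension $g \ge 7$. Your write-up reaches exactly the same conclusion by exactly the same mechanism --- semicontinuity of $\beta$ to reduce to a simple $(A_0,L_0)$, then the best available bounds on $\beta(A_0,L_0)$ from \cite{ca,itoAG,ji2}, followed by a finite case analysis --- and you correctly acknowledge at the end that for $g \ge 7$ the argument does not close. So there is no discrepancy in approach, but you should be aware that what you have written is not a proof of the conjecture; it is a recapitulation of its known partial status, matching the paper's own commentary.

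One minor point of care: to pass from $\beta(A_0,L_0) < \tfrac{1}{g-1}$ to the same inequality for a general $(A,L)$ via upper semicontinuity, you should (as in the proof of Proposition~\ref{proplast1}) interpose a rational $r$ with $\beta(A_0,L_0) \le r < \tfrac{1}{g-1}$, since semicontinuity is phrased in terms of closed conditions $\{\beta \le r\}$.
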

\noindent Thanks to \cite[Theorem 1.5]{ji2}, Conjecture \ref{weakconj} holds true when $g \leq 6$, and there are  only finitely many primitive polarization types in each dimension $g \geq 7$ for which the conjecture has not yet been verified
 (see \cite[\S 3]{ji2} for details).

Finally, let us recall that, by \cite[Lemma 3.4(i)]{itoAG}, for any polarized abelian variety $(A, L)$ of dimension $g$, one has that 
\[
\frac{1}{\sqrt[g]{h^0(A, L)}} \leq \beta(A, L)\, ,
\]
and, 
 when $(A, L)$ is general in its moduli space, it is expected (see \cite[p.\! 3]{ji2}) that $\beta(A, L)$ is quite close to $\frac{1}{\sqrt[g]{h^0(A, L)}}$. 
Hence, sometimes even better estimates than the one in Conjecture \ref{weakconj} may be  possible.

\section*{Appendix: Proof of Lemma \ref{lemmaAP}}\label{appendix}

As said above, 
 since  the proof of Lemma \ref{lemmaAP} is given  in 
 \cite[Lemma 3.2.1]{alpa2} just with $m = 2$ (in which case $E_{A, 2 \Theta} = 2 \Theta$), and the details for $m \geq 3$ are left to the interested reader (see \cite[\S 5.2]{alpa2}), 
we will give a sketch of  their beautiful argument,   for the reader's benefit.
\begin{lemma}[\cite{alpa2}]
Let $(A, \Theta)$ be an indecomposable p.p.a.v.\! of dimension $g \geq 2$. For any $m \geq 2$, 
 the sheaf 
$M_{m\Theta} \otimes E_{A, \frac{m}{m-1}\Theta}$ is generated, in the sense of Definition \ref{defPar}, by 
 a prime divisor in $\Pic0 A$, which is algebraically equivalent to $m^{2(g-1)} \widehat{\Theta}$, where $\widehat{\Theta}$ denotes the dual principal polarization on 
$\Pic0 A$.  
\end{lemma}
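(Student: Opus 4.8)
The plan is to run the Fourier--Mukai argument of Alvarado--Pareschi --- which for $m=2$ is \cite[Lemma 3.2.1]{alpa2} --- with the semi-homogeneous bundle $E:=E_{A,\frac{m}{m-1}\Theta}$ playing the role that the line bundle $2\Theta$ plays there; its formal properties, in particular the pullback formula \eqref{crupb}, are precisely what reduce the needed positivity and cohomology statements to line bundles. First I would tensor the evaluation sequence $0\to M_{m\Theta}\to H^0(A,m\Theta)\otimes\OO_A\to m\Theta\to 0$ by $E$ and apply the Fourier--Mukai transform $R\widehat{S}$ associated to the Poincar\'e bundle. Both $E$ and $m\Theta\otimes E$ are semi-homogeneous with ample slope, hence satisfy $IT_0$, so their transforms $\widehat{E}$ and $\widehat{m\Theta\otimes E}$ are vector bundles on $\Pic0 A$ of ranks $\chi(E)=m^g$ and $\chi(m\Theta\otimes E)=m^{2g}$. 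Consequently $R\widehat{S}(M_{m\Theta}\otimes E)$ is governed by a single morphism
\[
\Phi\colon\ H^0(A,m\Theta)\otimes\widehat{E}\ \longrightarrow\ \widehat{m\Theta\otimes E}
\]
between vector bundles of the same rank $m^{2g}$, whose fibre at $\alpha\in\Pic0 A$ is the multiplication map $H^0(m\Theta)\otimes H^0(E\otimes\alpha)\to H^0(m\Theta\otimes E\otimes\alpha)$, with cokernel $H^1(A,M_{m\Theta}\otimes E\otimes\alpha)$.

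Next I would record that $M_{m\Theta}\otimes E$ is $GV$: this is the translation, via \eqref{crupb} and the stability of the $GV$ property under isogeny, of the $GV$-ness of the $\Q$-twisted sheaf \eqref{twistedM0} recalled above. In particular $\chi(M_{m\Theta}\otimes E)=0$, and a general $\alpha$ lies outside $V^1(A,M_{m\Theta}\otimes E)$; thus $\Phi$ is generically an isomorphism, $R^{i}\widehat{S}(M_{m\Theta}\otimes E)=0$ for $i\neq 1$, and $R^{1}\widehat{S}(M_{m\Theta}\otimes E)$ is a torsion sheaf supported exactly on the degeneracy divisor $D:=\{\det\Phi=0\}=V^1(A,M_{m\Theta}\otimes E)$. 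A Grothendieck--Riemann--Roch computation --- using that $E$ is semi-homogeneous, so $\mathrm{ch}(E)=(m-1)^g\exp\bigl(\tfrac{m}{m-1}\Theta\bigr)$, together with the standard behaviour of Chern characters under $R\widehat{S}$ --- then gives
\[
[D]\ =\ c_1(\widehat{m\Theta\otimes E})-m^{g}\,c_1(\widehat{E})\ =\ (m-1)^{2}\,m^{2(g-1)}\,\widehat{\Theta}
\]
in $\NS(\Pic0 A)$.

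It then remains, on the one hand, to show that $V^1(A,M_{m\Theta}\otimes E)$ is a single prime divisor $D_0$ --- so that, by the class computation, $\det\Phi$ vanishes along $D_0$ with multiplicity $(m-1)^2$ (no multiplicity when $m=2$) and $[D_0]=m^{2(g-1)}\widehat{\Theta}$ --- and, on the other hand, to deduce from this that $M_{m\Theta}\otimes E$ is generated by $D_0$ in the sense of Definition \ref{defPar}. The second deduction I would obtain from Pareschi's Fourier--Mukai criterion for generation by a subvariety \cite{parPA}: its hypotheses are exactly that $M_{m\Theta}\otimes E$ is $GV$, that $R^{i}\widehat{S}$ vanishes for $i\neq 1$, and that the only failure of $M$-regularity occurs along $D_0$, all of which are in hand; the underlying point is that along $D_0$ the multiplication maps $\Phi_\alpha$ acquire extra cokernel, and this is precisely what makes the twisted evaluation maps $\mathrm{ev}_U$ of Definition \ref{defPar} surjective for every open $U$ meeting $D_0$. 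I expect the genuine obstacle to be the first assertion, the irreducibility of $D_0$. This is where the indecomposability of $(A,\Theta)$ --- equivalently, the irreducibility of $\Theta$ --- is essential, for if $(A,\Theta)$ split as a product of p.p.a.v.'s then $M_{m\Theta}$, $E$, hence $\Phi$ and $D$, would split along the factors. Concretely, following \cite{alpa2} in the case $m=2$, one identifies $D_0$ with the zero locus of a single theta-type section on $\Pic0 A$ built from $\Phi$ and the principal polarization, and checks that this section is irreducible precisely because $\Theta$ carries no product structure; the estimate $h^1(\OO_{D_0})=g>\dim D_0$ used in the proof of Corollary \ref{corAP} is a reflection of this irreducibility.
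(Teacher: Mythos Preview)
Your Fourier--Mukai framework is correct and close in spirit to the paper's, but there is a genuine gap at exactly the point you flag: the irreducibility of $D_0$. Your sketch of that step (``a single theta-type section built from $\Phi$'') is not an argument, and the paper's method here is something you have not guessed.

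First, a difference of setup. The paper does not transform the evaluation sequence of $m\Theta$ tensored by $E$; it transforms the evaluation map of $E$ tensored by $\OO_A(m\Theta)$, namely
\[
\twe\colon H^0(A,E_{A,\frac{m}{m-1}\Theta})\otimes\OO_A(m\Theta)\ \longrightarrow\ E_{A,\frac{m}{m-1}\Theta}(m\Theta)\ \simeq\ E_{A,\frac{m^2}{m-1}\Theta}\,.
\]
After $\Phi_\PP$ the fibre at $\alpha$ is $\nu_\alpha\colon H^0(E)\otimes H^0(m\Theta\otimes\alpha)\to H^0(E(m\Theta)\otimes\alpha)$, still a map between spaces of dimension $m^{2g}$, but now the class of its determinantal divisor $D$ comes out to be exactly $m^{2(g-1)}\widehat\Theta$, with no spurious $(m-1)^2$ factor. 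This matters, because the paper's primality argument is a class-counting one, and your extra multiplicity would break it.

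Second, and this is the missing idea: to prove $D$ is prime, the paper pulls $\Phi_\PP(\twe)$ back along the isogeny $\varphi_{m\Theta}\colon A\to\Pic0 A$ (equivalently $\mu_m$ on $\Pic0 A$). Fibrewise at $p\in A$ one obtains the multiplication map
\[
\nu_p\colon H^0(E)\otimes H^0(t_p^*\,m\Theta)\ \longrightarrow\ H^0(E\otimes t_p^*\,m\Theta)\,,
\]
and here \cite[Theorem~A]{parANT} (generalising a classical result of Kempf) gives a completely explicit answer: $\dim\Coker\nu_p = |A[m]\cap\Theta_p|$. Hence the pulled-back degeneracy locus is set-theoretically $\bigcup_{x\in A[m]}\Theta_x$; since $\Theta$ is irreducible (this is where indecomposability enters) these are $m^{2g}$ distinct prime divisors, and comparing N\'eron--Severi classes forces $\varphi_{m\Theta}^*D=\sum_{x\in A[m]}\Theta_x$ with all multiplicities one. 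It follows that, up to translation, $D={\varphi_{m\Theta}}_*\Theta$, which is known to be prime. The final generation step then goes via \cite[Corollary~6.2.1]{parPA}, as you anticipated.

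In short: your outline would not close without the pullback-by-$\varphi_{m\Theta}$ step and the input from \cite{parANT}, and your alternative choice of evaluation map introduces a multiplicity that obstructs precisely the class-matching argument on which primality rests.
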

\begin{proof}
Consider the twisted evaluation map 
\begin{equation*}\label{twevaluation}
\twe \colon H^0(A, E_{A, \frac{m}{m-1}\Theta}) \otimes \OO_A(m\Theta) \to  E_{A, \frac{m}{m-1}\Theta}(m \Theta) \simeq  E_{A, (\frac{m}{m-1} + m) \Theta} = E_{A, \frac{m^2}{m-1}\Theta}\, , 
\end{equation*}
and denote by 
\[
\Phi_{\PP} \colon \D(A) \stackrel{\simeq}{\longrightarrow} \D(\Pic0 A)
\]
the Fourier-Mukai-Poincar\'e equivalence \cite{mukai2} (here $\PP$ is the normalized Poincar\'e line bundle on $A \times \Pic0 A$, as usual). It follows from \eqref{crupb}, thanks to cohomology and base change, that $\Phi_{\PP}(E_{A, \frac{a}{b}\Theta})$ is a locally free sheaf on $\Pic0 A$, if $a, b > 0$, and moreover the morphism 
\[ 
\Phi_{\PP}(\twe) \colon H^0(A, E_{A, \frac{m}{m-1}\Theta}) \otimes \Phi_{\PP}(\OO_A(m \Theta)) \to  \Phi_{\PP}(E_{A, \frac{m^2}{m-1} \Theta}) 
\]
of locally free sheaves on $\Pic0 A$ 
 is fiber-wise the multiplication of global sections
\begin{equation}\label{fiwise0}
 \nu_{\alpha} \colon H^0(A, E_{A, \frac{m}{m-1}\Theta})
 \otimes H^0(A, \OO_A(m \Theta) \otimes \alpha) \to  H^0(A, E_{A, \frac{m^2}{m-1} \Theta} \otimes \alpha)\, ,
\end{equation}
with $\alpha \in \Pic0 A$.

Let us assume (this can be harmlessly done, up to a translation) that the theta divisor $\Theta$ is symmetric. 
We now use the basepoint-freeness threshold (see \S \ref{lastS}). Since we deal with a principal polarization, it is not difficult to see that  
$\beta(m\Theta) = \frac{\beta(\Theta)}{m} =  \frac{1}{m}$. 
Therefore, since $m \geq 2$, 
 it follows from \cite[Proposition 8.1]{jipa} that 
\[
M_{m\Theta}\la \frac{m}{m-1} \Theta \ra
\]
is $GV$. Thanks to \eqref{eqlast0}, this means that $M_{m\Theta} \otimes E_{A, \frac{m}{m-1}\Theta}$ is $GV$ too, which in turn implies that 
\eqref{fiwise0} are surjective for  general $\alpha \in \Pic0 A$. Since the dimension of the source and of the target of 
\eqref{fiwise0} are equal (to $m^{2g}$, see \cite[Theorem 7.11(5)]{mukai1}, or \cite[(1.9)]{alpa1}), the locus of the $\alpha$'s such that  $\nu_{\alpha}$ 
drops rank is a divisor. It
 coincides with the support of the divisor of zeroes of  the determinant morphism
$\det(\Phi_{\PP}(\twe))$.  

Let us  denote this divisor of zeroes by $D$.  
The N\'eron-Severi class of $D$  is  the one   of $m^{2(g-1)}\widehat{\Theta}$, where $\widehat{\Theta}$ is the dual principal polarization on $\Pic0 A$. To do this computation,  one uses the fact the first Chern class of the source and of the target of the morphism $\det(\Phi_{\PP}(\twe))$ can be easily calculated using the fact (see, e.g., \cite[Proposition 2.2.1]{alpa1}) that
\begin{equation}\label{change}
\Phi_{\PP}(E_{A, \frac{a}{b}\Theta}) = E_{\Pic0 A, \frac{-b}{a}\widehat{\Theta}}
\end{equation}
when $a > 0$, 
and moreover that (see \cite[(1.9)]{alpa1})
\[
\rank (E_{\Pic0 A, \frac{-1}{m}\widehat{\Theta}}) = m^g \quad\, \mathrm{and} \quad \rank (E_{\Pic0 A, \frac{-(m-1)}{m^2}\widehat{\Theta}}) = m^{2g}\, . 
\]

The next step is to show that $D$ is indeed a prime divisor.  
Having this in mind, let us consider the pullback morphism 
\begin{multline*}\label{pullmor0}
\varphi_{m\Theta}^*(\Phi_{\PP}(\twe)) \colon H^0(A, E_{\Pic0 A, \frac{m}{m-1}\widehat{\Theta}}) \otimes \varphi_{m\Theta}^*(E_{\Pic0 A, \frac{-1}{m}\widehat{\Theta}}) 
\rightarrow  \varphi_{m\Theta}^*(E_{\Pic0 A, \frac{-(m-1)}{m^2} \widehat{\Theta}})\, , 
\end{multline*}
where  $\varphi_{m\Theta} \colon A \to \Pic0 A$ is the isogeny sending $p \in A$ to $t_p^*\OO_A(m\Theta) \otimes \OO_A(- m \Theta)$,   
 and we applied \eqref{change}. Arguing as above,\footnote{Note that, via the isomorphism given by the principal polarization $\varphi_{\Theta} \colon A \stackrel{\simeq}{\longrightarrow} \Pic0 A$, one has that $\varphi_{m\Theta}$ is identified with the multiplication-by-$m$ isogeny on $\Pic0 A$. Namely, the following diagram is commutative
\[
\xymatrix{
A \ar[d]^-{\varphi_{\Theta}}_{\simeq} \ar[dr]^-{\varphi_{m\Theta}}  \\
\Pic0 A \ar[r]^-{\mu_{m}} &\Pic0 A\, .
}
\]
}    and using \eqref{crupb},  one obtains that the class of the divisor of zeroes, denoted by $E$, of $
\det(\varphi_{m\Theta}^*(\Phi_{\PP}(\twe)))$ is the same of $m^{2g}\Theta$.
 Moreover, 
one has that 
\begin{equation}\label{Esum}
E =   \sum_{x \in A[m]} \Theta_x\, ,
\end{equation}
where $\Theta_x := \Theta - x$ and $A[m]$ is the subgroup of $m$-torsion points of $A$.
Note that, since $\Theta$ is supposed to be irreducible, on the right-hand side we have a sum of distinct prime divisors. 
 The equality \eqref{Esum} is a consequence of \cite[Theorem A]{parANT}, 
which generalizes a classical result of Kempf \cite[Theorem 3]{kempf1}. 
Indeed, the morphism $\varphi_{m\Theta}^*(\Phi_{\PP}(\twe))$ of sheaves on $A$,   fiber-wise is  the multiplication map of global sections
	\[
	\nu_p \colon H^0(A, E_{A, \frac{m}{m-1}\Theta}) \otimes H^0(A, t_p^* \OO_A(m\Theta)) \to H^0(A, E_{A, \frac{m}{m-1}\Theta} \otimes  t_p^* \OO_A(m\Theta))\, ,
	\]
	with $p \in A$. 
	 Now, \cite[Theorem A]{parANT}\footnote{In \cite{parANT}, $W_{b, a}$ denotes what here is called
  $E_{A, \frac{a}{b}\Theta}$.} 
 says that the dimension of the cokernel of $\nu_p$ is precisely the cardinality of
	\[
	A[m] \cap \Theta_p\, ,
	\]
which is the set of $m$-torsion points sitting in the translated theta divisor $\Theta_p$. Therefore, the map $\nu_p$ is not surjective if and only if there exists a point $x \in A[m] \cap \Theta_p$, which is equivalent to say that $p \in \Theta_x$ for a point  $x \in A[m]$.
 This suffices to say that the support of $E$ equals the support of the right-hand side of \eqref{Esum}. Since we already showed that the N\'eron-Severi class of $E$ is the same of the one  of the right-hand sum of \eqref{Esum},  we get the equality in \eqref{Esum}.

Now, since $\varphi_{m\Theta}^* {\varphi_{m\Theta}}_* (\Theta) = \sum_{x \in A[m]} \Theta_x$ and since $\varphi_{m\Theta}^*D = E$,
one has that, up to translation by an $m$-torsion point,  $$D = {\varphi_{m\Theta}}_* (\Theta)\, ,$$ and it is known  that ${\varphi_{m\Theta}}_* (\Theta)$ is a prime divisor (see \cite[p.\! 1591]{parANT}).

Finally, the proof of the generation of the $GV$ sheaf $M_{m \Theta} \otimes E_{A, \frac{m}{m-1}\Theta}$ by the prime divisor $D$ goes as in \cite[\S 3.3, Proof of Lemma 3.2.1(2)]{alpa2}, and it is based on \cite[Corollary 6.2.1]{parPA} (see also \cite[\S 3.4]{alpa2} for a self-contained proof of this result). 
\end{proof}

\providecommand{\bysame}{\leavevmode\hbox
to3em{\hrulefill}\thinspace}

\end{document}